\theoremstyle{plain}
\newtheorem{thm}{Theorem}
\newtheorem{lem}[thm]{Lemma}
\newtheorem{lemma}[thm]{Lemma}
\newtheorem{cor}[thm]{Corollary}
\newtheorem{prop}[thm]{Proposition}
\newtheorem*{claim}{Claim}
 \theoremstyle{remark}
 \newtheorem{remark}[thm]{Remark}
\newcommand{\lto}{\longrightarrow}
\newcommand{\al}{\alpha}
\newcommand{\be}{\beta}
\newcommand{\ga}{\gamma}
\newcommand{\ep}{\varepsilon}
\renewcommand{\th}{\theta}
\newcommand{\la}{\lambda}
\newcommand{\om}{\omega}
\newcommand{\si}{\sigma}
\newcommand{\De}{\Delta}
\newcommand{\Om}{\Omega}
\newcommand{\Si}{\Sigma}
\newcommand{\ZZ}{{\mathbb Z}}
\newcommand{\RR}{{\mathbb R}}
\newcommand{\CC}{{\mathbb C}}
\newcommand{\NN}{{\mathbb N}}
\newcommand{\SL}{{\rm SL}}
\newcommand{\PSL}{{\rm PSL}}
\newcommand{\GL}{{\rm GL}}
\newcommand{\tr}{\operatorname{{\it tr}}}
\newcommand{\Aut}{\operatorname{Aut}}
\newcommand{\Hom}{\operatorname{Hom}}
\newcommand{\Tor}{\operatorname{Tor}}
\newcommand{\ad}{\operatorname{{\it ad}}}
\def\co{\colon\thinspace}
\newcommand{\sm}{{\smallsetminus}}
\begin{document}
\title[Metabelian $\SL(n,\CC)$ representations of knot groups, III]{Metabelian $\boldsymbol{\SL(n,\CC)}$ representations of \\knot groups, III:  deformations}

\author{Hans U. Boden}
\address{Department of Mathematics, McMaster University,
Hamilton, Ontario} \email{boden@mcmaster.ca}
\thanks{The first author was supported by a grant from the Natural Sciences and Engineering Research Council of Canada.}

\author{Stefan Friedl}
\address{Mathematisches Institut\\ Universit\"at zu K\"oln\\   Germany}
\email{sfriedl@gmail.com}

\subjclass[2010]{Primary: 57M25, Secondary: 20C15}
\keywords{Metabelian representation, knot group, character variety, deformation}

\date{\today}
\begin{abstract}
Given a knot $K$ with complement $N_K$ and an irreducible metabelian $\SL(n,\CC)$ representation  $\al \co \pi_1(N_K) \to \SL(n,\CC)$, we establish
the inequality $ \dim  H^1(N_K;sl(n,\CC)_{\ad \al}) \geq n-1$. In the case of equality, we prove that $\al$ must have finite image and is conjugate to an $SU(n)$ representation. In this case we show $\al$ determines a smooth point $\xi_\al$ in the $\SL(n,\CC)$ character variety, and we use a deformation argument to establish the existence of a  smooth $(n-1)$--dimensional family of characters of irreducible $\SL(n,\CC)$ representations near $\xi_\al,$ and a corresponding sub--family of characters of irreducible $SU(n)$ representations of real dimension $n-1$. Both families can be chosen so that $\xi_\al$ is the only metabelian character.

Combining this with our previous existence results, we deduce the existence of large families of irreducible $SU(n)$ and $\SL(n,\CC)$ non-metabelian representation for knots $K$ in homology 3-spheres $\Si$ with nontrivial Alexander polynomial. We then
relate the condition
on twisted cohomology to a more accessible condition
on untwisted cohomology of  a certain metabelian branched cover $\widehat{\Si}_\varphi$
of $\Si$ branched along $K$.
 \end{abstract}
\maketitle

\section{Introduction}

Suppose $K$ is an oriented knot in an integral homology 3-sphere $\Si$ with exterior
$N_K=\Si^3\sm \tau(K).$
In \cite{BF08}, we show how to construct irreducible metabelian $\SL(n,\CC)$ representations
of the knot group $\pi_1(N_K)$ for any knot $K$ with nontrivial Alexander polynomial.
This provides a constructive proof for the existence of irreducible metabelian representations in
$\SL(n,\CC)$, and in this paper we prove a stronger existence result (see Theorem \ref{thm:infinitelymanyreps}) and consider the problem of existence  of irreducible non-metabelian $\SL(n,\CC)$ representations
of $\pi_1(N_K)$.

In rank $n=2$,  a result of Thurston implies that any irreducible metabelian
representation $\al \co \pi_1(N_K) \to \SL(2,\CC)$ can be deformed within the larger space of all (conjugacy classes of) representations,
and in fact Theorem 3.2.1 of \cite{CS83} shows the existence of a family of conjugacy classes of irreducible representations
near $\al$ of dimension $ \ge 1$.
In this paper, we study the character varieties of knot groups in higher rank, with a focus on existence of irreducible metabelian $\SL(n,\CC)$ representations and their deformations.
For instance, given an irreducible metabelian  representation $\al \co \pi_1(N_K) \to \SL(n,\CC)$ satisfying a cohomological condition,
we establish the existence of an $(n-1)$-dimensional  family of conjugacy classes of irreducible non-metabelian $\SL(n,\CC)$
representations near $\al$.

In order to more precisely state our results, we introduce notation that will be used throughout the paper.

Given a finitely generated group $\pi$, let $R_n(\pi) = \Hom(\pi, \SL(n,\CC))$ be
the representation variety,
which is an affine algebraic set with a natural action of $\SL(n,\CC)$ by conjugation. The set-theoretic quotient is in general not well-behaved, (e.g. it is typically not Hausdorff), so instead we consider the natural quotient in the category of algebraic sets, which is by definition the character variety $X_n(\pi)$ (see \cite{LM85} for details on the construction of character varieties).
Given a representation $\al \co \pi \to \SL(n,\CC)$, its character is the map
$\xi_\al \co \pi \to \CC$ defined by $\ga \mapsto \tr \al(\ga)$, and
setting $t(\al) = \xi_\al$ defines the quotient map
$t \co R_n(\pi) \to X_n(\pi)$.

For a topological space $M$, let $R_n(M) = R_n(\pi_1 (M))$ and $X_n(M)=X_n(\pi_1 (M))$.
Given  $\al \co \pi_1(M) \to \SL(n,\CC)$, let $\ad \al$ be its composition with
the adjoint representation  on the Lie algebra $sl(n,\CC)$, thus  $\ad \al$ determines a $\pi_1(M)$ action on $sl(n,\CC)$. We let $H^*(M;sl(n,\CC)_{\ad \al})$ denote the
 cohomology groups of $M$ with coefficients in $sl(n,\CC)$ twisted by this action.

Given  an irreducible metabelian representation $\al \co \pi_1(N_K) \to \SL(n,\CC)$, we show that  $\dim H^1(N_K; sl(n,\CC)_{\ad \al}) \ge n-1$ (Proposition \ref{prop-Lag}) and deduce that $\dim X_j \geq n-1$ for any algebraic component $X_j \subset X_n(N_K)$  containing $\xi_\al$ (Corollary \ref{cor-dim-alg-comp}).
We then show that if $\al \co \pi_1(N_K) \to \SL(n,\CC)$ is an irreducible metabelian representation
such that  $\dim H^1(N_K; sl(n,\CC)_{\ad \al}) = n-1$, then $\al$
has finite image and is conjugate to a unitary representation.
The following result gives a local description of the character variety near $\xi_\al$ under the assumption
$\dim H^1(N_K; sl(n,\CC)_{\ad \al}) = n-1$.

\begin{thm}\label{thm1}
If $\al \co \pi_1(N_K) \to \SL(n,\CC)$ is an irreducible metabelian representation
with $\dim H^1(N_K; sl(n,\CC)_{\ad \al}) = n-1$, then $\al$ has finite image and is therefore conjugate to a unitary representation. Further, we have:
\begin{enumerate}
\item[(i)] The character $\xi_\al$ is a smooth point in $X_n(N_K)$, and there
exists a smooth complex $(n-1)$--dimensional family of characters of
irreducible $\SL(n,\CC)$ representations near $\xi_\al \in X_n(N_K)$.
\item[(ii)] As a point in $X_{SU(n)}(N_K)$, the character $\xi_\al $ is again a smooth point
and there exists a smooth real $(n-1)$--dimensional family of characters of irreducible $SU(n)$
representations near $\xi_\al \in X_{SU(n)}(N_K)$.
\end{enumerate}
Both deformation families can be chosen so that
$\xi_\al$ is the only metabelian character within them.
 \end{thm}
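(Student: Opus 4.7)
My plan is to proceed in three stages: first establish that $\al$ has finite image, next construct an explicit $(n-1)$-parameter family of deformations realizing smoothness of the character variety, and finally deduce the $SU(n)$ statement and the metabelian uniqueness.

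For the finite image claim, I would invoke the parametrization of irreducible metabelian $\SL(n,\CC)$ representations established in \cite{BF08}: up to conjugation, such an $\al$ is determined by an $n$-th root of unity $\ze$ (controlling $\al(\mu)$) together with a non-trivial character $\chi$ of the Alexander module $H_1(\what{N}_K)$ that transforms appropriately under the deck group. Using a Mayer-Vietoris or Wang-type sequence for the cyclic $n$-fold cover of $N_K$, I would express $\dim H^1(N_K; sl(n,\CC)_{\ad \al})$ in terms of the order of vanishing of the Alexander polynomial at powers of the eigenvalues of $\chi$. The minimal value $n-1$ then arises exactly when $\chi$ has finite order, which forces the image of $\al$ to be finite; Weyl's unitary trick then conjugates this finite image into $SU(n)$.

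Since $\al$ is irreducible, $H^0(N_K; sl(n,\CC)_{\ad \al})=0$, so the Zariski tangent space to $X_n(N_K)$ at $\xi_\al$ is naturally identified with $H^1(N_K; sl(n,\CC)_{\ad \al})$, of dimension $n-1$ by hypothesis. To prove (i), it suffices to exhibit a smooth $(n-1)$-dimensional family of characters of irreducible $\SL(n,\CC)$ representations through $\xi_\al$, since its image must then fill the tangent space and force smoothness. I would construct this family by perturbing the eigenvalues of $\al(\mu)$: the abelianization of $\pi_1(N_K)$ is $\ZZ$ generated by $\mu$, and $\al(\mu)$ has $n$ eigenvalues with product $1$, so the torus of possible eigenvalue tuples has complex dimension $n-1$. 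Concretely, for each parameter $t \in \CC^{n-1}$ near $0$ I would deform the induced character on the Alexander module in a controlled way to produce $\al_t$, and verify that $\al_t$ remains irreducible (an open condition) and that $t \mapsto \xi_{\al_t}$ is a local embedding. For (ii), restricting to unit-circle perturbations gives a real $(n-1)$-parameter family of $SU(n)$ representations, and smoothness of the real analytic $SU(n)$-slice follows from smoothness of the ambient complex family together with the fact that $X_{SU(n)}(N_K)$ is cut out in $X_n(N_K)$ by real analytic equations of the expected codimension near $\xi_\al$.

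The main technical obstacle is showing that the formal eigenvalue parameters actually integrate to an honest family of representations, i.e., that the second-order obstruction $H^1 \otimes H^1 \to H^2$ vanishes on the relevant directions. The finite image property is crucial here: one can decompose $sl(n,\CC)_{\ad \al}$ as a representation of the finite image group and analyze the obstruction via equivariant cohomology on the associated finite cover of $N_K$. Finally, the metabelian uniqueness within the family follows from the observation that the locus of irreducible metabelian characters in $X_n(N_K)$ is zero-dimensional (parametrized by pairs $(\ze, \chi)$ up to a Galois action, giving a discrete set), so $\xi_\al$ is isolated in the metabelian locus, and shrinking the neighborhood makes the family contain no other metabelian character.
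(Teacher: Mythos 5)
Your first stage (finite image) is broadly in the spirit of the paper's Proposition \ref{prop-finite}, but the asserted equivalence is off. The paper's computation, via the decomposition $\ad\al\oplus\th_1\cong\al_n\oplus\bigoplus_{i=1}^{n-1}\be_{(n,\chi_i)}$ of Lemma \ref{lem-a} together with the orthogonality of $\ad\al$ for the Killing form, gives $\dim H^1(N_K;sl(n,\CC)_{\ad\al})\geq b_1(L_n)+n-1$; equality therefore forces $b_1(L_n)=0$, i.e.\ $H_1(L_n)$ finite, hence $\al$ has finite image. It is not true that the minimal value $n-1$ is attained ``exactly when $\chi$ has finite order'': $\chi$ can have finite order while $b_1(L_n)>0$, in which case $\al$ still has finite image but $\dim H^1>n-1$ (this is precisely the trefoil-in-rank-$6$ phenomenon the paper points out). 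The implication you actually need does follow from the correct computation, but your Wang-sequence sketch as stated does not isolate the right invariant, namely $b_1(L_n)$.

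The serious gap is in your second stage. A representation of $\pi_1(N_K)$ is not determined by $\al(\mu)$, and within the metabelian locus the eigenvalues of $\al(\mu)$ are rigid (they must be $z,\om z,\dots,\om^{n-1}z$ with $z^n=(-1)^{n+1}$), so any genuine deformation must leave the metabelian locus; there is no ``controlled way to deform the induced character on the Alexander module'' that produces the required non-metabelian $\al_t$. You correctly identify integrability as the obstacle, but vanishing of the quadratic cup-product obstruction $H^1\otimes H^1\to H^2$ is not sufficient: there is an infinite sequence of higher obstructions, and even full formal integrability only yields an actual deformation after invoking Artin's theorem on solutions of analytic equations. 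The mechanism the paper uses, and which your proposal omits entirely, is restriction to the boundary torus: since $\al(\mu)$ has $n$ distinct eigenvalues, $\widehat\al$ is a simple point of $R_n(\partial N_K)$; the hypothesis $\dim H^1=n-1$ makes $H^2(\pi_1(N_K);sl(n,\CC)_{\ad\al})\to H^2(\pi_1(\partial N_K);sl(n,\CC)_{\ad\widehat\al})$ injective via the long exact sequence of the pair and Poincar\'e duality; and naturality of the obstruction classes then kills every obstruction because each one vanishes after restriction to the boundary. Without this (or a substitute), your construction does not go through. The $SU(n)$ step and the metabelian-uniqueness step are essentially as in the paper (the latter because finiteness of $H_1(L_n)$ makes the set of conjugacy classes of irreducible metabelian representations finite), but both rest on the missing deformation argument.
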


Deformations of  dihedral $\SL(2,\CC)$ representations were studied by Heusener and Klassen in \cite{HK97}, and metabelian representations in $\SL(n,\CC)$ are their analogues in higher rank. Theorem \ref{thm1} is established by applying
deformation arguments developed for $\SL(2,\CC)$ and $\PSL(2,\CC)$ by Heusener, Porti, and  Su\'{a}rez Peir\'{o}
in \cite{HPS01, HP05}. These techniques were extended to $\SL(n,\CC)$ in \cite{AHJ10}, where they were applied to deform reducible metabelian $\SL(3,\CC)$ representations of knot groups.  In this paper, we apply the same technique to the problem of deforming  irreducible metabelian characters.
In Subsection \ref{sec-simple}, we state the deformation results  that are needed to establish Theorem \ref{thm1}, and in Appendix \ref{app-deform}, we provide detailed arguments for these results, following the treatment given in  \cite{HPS01, HP05, AHJ10}.

Theorem \ref{thm1} applies in many cases. For instance, in rank $2$, given an irreducible
representation $\al \co \pi_1(N_K) \to \SL(2,\CC)$ such that $\{\al(\mu), \al(\la) \} \not\subset \{ \pm I \},$
Thurston proved that any algebraic component  of $X_2(N_K)$ containing $\xi_\al$ has dimension $d\geq 1$
(see Theorem 3.2.1 of \cite{CS83}). We will see that every irreducible metabelian $\SL(2,\CC)$ representation
$\al$ satisfies this condition, and thus $\xi_\al \in X_2(N_K)$ can be deformed to an irreducible non-metabelian representation. If one assumes, in addition, that $N_K$ does not contain any closed incompressible surfaces, then
it follows from \cite{CS83} that any algebraic component   $X_2(N_K)$ has dimension $ d=1.$
Knots $K$ whose complements $N_K$ satisfy this condition are called {\em small}, and we see that Theorem \ref{thm1} applies to irreducible metabelian representations
$\al\co \pi_1(N_K) \to \SL(2,\CC)$ when $K$ is a  small knot  in an integral homology 3-sphere.
 In Subsection \ref{sec-examples}, we show by example that Theorem \ref{thm1} can also be applied in higher rank.

Note that Theorem \ref{thm1} does not apply
to  knots $K$ whose Alexander polynomial $\De_K(t)$ has a root which is an $n$-th root of unity.
 Indeed, if $L_n$ is the $n$-fold cyclic branched cover of $\Si$ branched along $K$, then we have $b_1(L_n) >0$, and
 any irreducible metabelian representation  $\al \co \pi_1(N_K) \to \SL(n,\CC) $
will have $\dim H^1(N_K; sl(n,\CC)_{\ad \al}) > n-1$ (see Proposition \ref{prop-finite}).
The simplest example occurs in rank $n=6$ for the trefoil knot $K=3_1,$
though other examples can be constructed using Theorem 3.10 of \cite{BF08}.

Thus, it is useful to have an alternative criterion for applying Theorem \ref{thm1}, and our next result  provides such a criterion in terms of the untwisted cohomology of a certain metabelian branched cover of $\Si$ branched along $K$.

\begin{thm} \label{thm2}
Suppose that $n$ is such that $b_1(L_n)=0$ (equivalently,  suppose the Alexander polynomial
$\De_K(t)$ has no root which is an $n$-th root of unity).
Suppose further that  $\al \co\pi_1(N_K)\to \SL(n,\CC)$ is an irreducible metabelian representation
 and $\varphi\co \pi_1(N_K) \to \ZZ/n \ltimes H$ is a group homomorphism with $H$ finite and abelian
such that $\al$ factors through $\varphi$.
Denote by $\widetilde{N}_\varphi\to N_K$ the covering map corresponding to $\varphi$. Then the following hold:
\begin{enumerate}
\item[(i)]  $b_1(\widetilde{N}_\varphi) \geq |H|$ and
if $b_1(\widetilde{N}_\varphi)=|H|$, then $\dim H^1(N_K; sl(n,\CC)_{\ad \al})= n-1$.
\item[(ii)] The cover $\widetilde{N}_\varphi\to N_K$ extends to a  cover $\widehat{\Si}_\varphi\to \Si$ branched over $K$.
\item[(iii)] If $b_1(\widehat{\Si}_\varphi)=0$, then $\dim H^1(N_K; sl(n,\CC)_{\ad \al})= n-1$.
\end{enumerate}
\end{thm}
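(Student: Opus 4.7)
The plan is to establish (ii) directly from the boundary structure of $\widetilde{N}_\varphi$, then prove (i) using half-lives-half-dies together with a Frobenius-reciprocity computation, and finally deduce (iii) as a consequence.

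\textbf{Part (ii).} The boundary $\partial N_K$ is a torus with $\pi_1 = \langle \mu, \la \rangle$, and since $\Si$ is an integral homology 3-sphere, $\la$ lies in the commutator subgroup of $\pi_1(N_K)$. Hence $\varphi(\la) \in [G,G]$, and a direct calculation in $G = \ZZ/n \ltimes H$ yields $[G,G] \subseteq \{0\} \times H$. Under the standard normalization of irreducible metabelian representations from \cite{BF08}, one may take $\varphi(\mu) = (1,0)$, of order $n$, and $\varphi(\la) = 0$, so $A := \varphi(\pi_1(\partial N_K)) \cong \ZZ/n$. The preimage of $\partial N_K$ in $\widetilde{N}_\varphi$ is then a disjoint union of $[G:A]=|H|$ tori, each a connected $n$-fold cyclic cover of $T^2$; capping each with the preimage of the filling solid torus $D^2 \times S^1 \subset \Si$ yields $\widehat{\Si}_\varphi$.

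\textbf{Part (i).} The bound $b_1(\widetilde{N}_\varphi) \geq |H|$ is immediate from half-lives-half-dies: $\widetilde{N}_\varphi$ has $|H|$ boundary tori, so $\dim H_1(\partial \widetilde{N}_\varphi;\QQ) = 2|H|$ and the kernel of the map to $H_1(\widetilde{N}_\varphi;\QQ)$ is Lagrangian, giving image of rank exactly $|H|$. For the equality statement, since $\al$ factors through $\varphi$ the pullback of $sl(n,\CC)_{\ad \al}$ to $\widetilde{N}_\varphi$ is trivial, and the transfer isomorphism (valid since $|G|$ is invertible in $\CC$) gives
$$H^1(N_K; sl(n,\CC)_{\ad\al}) \cong \bigl(H^1(\widetilde{N}_\varphi;\CC)\otimes sl(n,\CC)\bigr)^G.$$
In the equality case $b_1(\widetilde{N}_\varphi)=|H|$, the image of $H_1(\partial\widetilde{N}_\varphi;\CC)$ is all of $H_1(\widetilde{N}_\varphi;\CC)$. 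Writing $H_1(\partial\widetilde{N}_\varphi;\CC) \cong \mathrm{Ind}_A^G(\CC^2)$ (with $A$ acting trivially on $H_1$ of each boundary torus, as the deck action is by translation) and using that the Lagrangian is a $G$-invariant subspace of dimension $|H|$, a Schur-lemma argument identifies $H_1(\widetilde{N}_\varphi;\CC) \cong \mathrm{Ind}_A^G(\CC)$ as a $G$-module. The projection formula then yields
$$\dim\bigl(H^1(\widetilde{N}_\varphi;\CC)\otimes sl(n,\CC)\bigr)^G = \dim sl(n,\CC)^A,$$
and since $A$ is generated by $\al(\mu)$, a regular semisimple element of order $n$ with $n$ distinct eigenvalues, its centralizer in $sl(n,\CC)$ has dimension exactly $n-1$.

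\textbf{Part (iii).} By (ii), $\widehat{\Si}_\varphi$ is obtained from $\widetilde{N}_\varphi$ by attaching $|H|$ solid tori; each Dehn filling reduces $b_1$ by at most one, so $b_1(\widehat{\Si}_\varphi) \geq b_1(\widetilde{N}_\varphi) - |H|$. Under $b_1(\widehat{\Si}_\varphi)=0$ this forces $b_1(\widetilde{N}_\varphi) \leq |H|$, and combined with (i) we get $b_1(\widetilde{N}_\varphi)=|H|$, whence (i) yields $\dim H^1(N_K; sl(n,\CC)_{\ad\al}) = n-1$. The main technical step is the equivariant identification $H_1(\widetilde{N}_\varphi;\CC) \cong \mathrm{Ind}_A^G(\CC)$ in the equality case of (i); this rests on the Schur-theoretic analysis of $G$-invariant half-dimensional subspaces of $\mathrm{Ind}_A^G(\CC^2)$, together with the standard regular semisimple form of $\al(\mu)$ arising from irreducible metabelian representations.
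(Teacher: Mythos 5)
Your parts (ii) and (iii) are essentially the paper's argument (the paper phrases the Dehn-filling estimate as a Mayer--Vietoris computation, but it is the same inequality $b_1(\what{\Si}_\varphi)\geq b_1(\widetilde{N}_\varphi)-|H|$), and your half-lives-half-dies proof of the inequality in (i) is a legitimate repackaging of the paper's Lemma \ref{lem-knot-V}. Where you genuinely diverge is the equality case of (i): the paper decomposes $\CC[\ZZ/n\ltimes H]$ into the representations $\be_{(n,\si_j)}$ indexed by the $|H|$ characters of $H$, observes that each summand contributes at least $1$ to $b_1(\widetilde{N}_\varphi)$ so that equality forces each contribution to be exactly $1$, and then reads off $\dim H^1(N_K;sl(n,\CC)_{\ad\al})=n-1$ from the decomposition $\ad\al\oplus\th_1\cong\al_n\oplus\bigoplus_{i=1}^{n-1}\be_{(n,\chi_i)}$ of Lemma \ref{lem-a}. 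Your transfer-plus-Frobenius route is attractive and the final reduction to $\dim sl(n,\CC)^A=n-1$ for $A$ generated by the regular semisimple element $\al(\mu)$ is correct, but it routes everything through a single $G$-module identification that the paper never needs.

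That identification is where there is a genuine gap. You claim that because $L:=\ker\bigl(H_1(\partial\widetilde{N}_\varphi;\CC)\to H_1(\widetilde{N}_\varphi;\CC)\bigr)$ is a $G$-invariant Lagrangian of dimension $|H|$ in $\operatorname{Ind}_A^G(\CC^2)\cong W\oplus W$ with $W=\CC[G/A]$, a Schur-lemma argument gives $H_1(\widetilde{N}_\varphi;\CC)\cong W$. This is false as pure representation theory: the Lagrangian condition only gives $(W\oplus W)/L\cong L^*$, so if $V$ is an irreducible constituent of $W$ with multiplicity $m_V$ and $\ell_V$ denotes its multiplicity in $L$, one gets $\ell_V+\ell_{V^*}=2m_V$. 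When $V\cong V^*$ this forces $\ell_V=m_V$, but $W=\CC[G/A]$ for $G=\ZZ/n\ltimes H$ typically contains non-self-dual constituents (take $n=3$, $H=\ZZ/7$ with $t$ acting by multiplication by $2$: then $\be_{(3,\si_1)}^*\cong\be_{(3,\si_6)}\not\cong\be_{(3,\si_1)}$, and both occur in $\CC[G/A]$ with multiplicity $1$), and for such a pair the constraint permits $\ell_V=2m_V$, $\ell_{V^*}=0$, e.g. $L=V^{\oplus 2}$, whose quotient is $(V^*)^{\oplus 2}\not\cong W$. Since $\dim\Hom_G(V,sl(n,\CC)_{\ad\al})$ and $\dim\Hom_G(V^*,sl(n,\CC)_{\ad\al})$ need not agree, this ambiguity would change the dimension you are computing, so the step cannot be waved through. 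The gap is closable: $L$ is the complexification of the kernel on rational homology, hence $\overline{L}=L$, and since $\overline{V}\cong V^*$ for unitary irreducibles this gives $\ell_V=\ell_{V^*}$, which together with $\ell_V+\ell_{V^*}=2m_V$ pins down $\ell_V=m_V$ and hence $H_1(\widetilde{N}_\varphi;\CC)\cong\CC[G/A]$. You should either add this reality argument explicitly or replace the step by the paper's character-by-character squeeze, which avoids determining the full $G$-module structure of $H_1(\widetilde{N}_\varphi;\CC)$ altogether.
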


 \begin{remark}
Theorem \ref{thm2} is a generalization of a result proved for dihedral groups by   Boileau and Boyer, see
 \cite[Lemma~A.2]{BB07}.
 \end{remark}

\bigskip \noindent
{\em Acknowledgments.  } The authors are grateful to  Steve Boyer, J\'er\^{o}me Dubois, Michael Heusener, and Adam Sikora for many
helpful discussions. The first author is happy to acknowledge the Max Planck Institute for Mathematics for its support.

\section{Metabelian representations of knot groups}

In this section we review the construction of metabelian representations for knot groups from \cite{BF08}.
We then use a result of Silver and Williams \cite{SW02} to show
existence of irreducible metabelian $\SL(n,\CC)$ representations for all but finitely many ranks $n$ for any knot $K$
whose Alexander polynomial $\Delta_K(t)$ has a root that is not a root of unity.

\subsection{Construction of irreducible metabelian SL$\boldsymbol{(n, \CC)}$ representations}
\label{sec2-3}

Given a group $\pi$ and a finite dimensional vector space $V$ over $\CC$,
a representation $\varrho \co \pi\to \Aut(V)$
is called \emph{reducible} if there exists a proper invariant subspace $U\subset V$, otherwise
$\varrho$  is called \emph{irreducible}.
We say $\varrho$
is \emph{metabelian}
 if its restriction $\varrho |_{ \pi^{(2)}}$ is trivial, where $\pi^{(2)}$ denotes the second commutator subgroup of $\pi$. Equivalently, a metabelian representation is one that factors
   through the metabelian quotient $\pi/\pi^{(2)}.$

Given a knot $K\subset \Si^3$ in an integral homology 3-sphere, let $N_K = \Si \sm \tau(K)$ be the complement and
$\widetilde{N}_K$ be the infinite cyclic cover of $N_K$.
Thus $\pi_1(\widetilde{N}_K)=\pi_1(N_K)^{(1)}$ and
$$ H_1(N_K;\ZZ[t^{\pm 1}])=H_1(\widetilde{N}_K) \cong \pi_1(N_K)^{(1)}/\pi_1(N_K)^{(2)},$$
where we use $\pi^{(n)}$ to denote the $n$--th term of the
derived series of a group $\pi$, so $\pi^{(1)}=[\pi,\pi]$ and $\pi^{(2)} = [\pi^{(1)},\pi^{(1)}]$, and so on.
The $\ZZ[t^{\pm 1}]$--module structure is
given on the right hand side
by $t^n\cdot g:=\mu^{-n}g\mu^n$, where $\mu$ is a meridian
of $K$.

Set $\pi:=\pi_1(N_K)$ and $H = H_1(N_K;\ZZ[t^{\pm 1}])$ and consider the short exact sequence
$$ 1\to \pi^{(1)}/\pi^{(2)}\to \pi/\pi^{(2)}\to\pi/\pi^{(1)}\to 1. $$
Since $\pi/\pi^{(1)}=H_1(N_K)\cong \ZZ$, this sequence splits and we get isomorphisms
$$ \begin{array}{rcccl} \pi/\pi^{(2)}
&\cong & \pi/\pi^{(1)} \ltimes \pi^{(1)}/\pi^{(2)}
&\cong &\ZZ \ltimes \pi^{(1)}/\pi^{(2)}   \cong \ZZ \ltimes H \\
    g &\mapsto &(\mu^{\ep(g)},\mu^{-\ep(g)}g) &\mapsto &(\ep(g),\mu^{-\ep(g)}g), \end{array}  $$
where the semidirect products are taken with respect to the $\ZZ$ actions defined by
letting $n \in \ZZ$ act by conjugation by $\mu^n$ on $\pi^{(1)}/\pi^{(2)}$ and by multiplication
by $t^n$ on $H_1(N_K; \ZZ[t^{\pm1}])$. This demonstrates the following lemma.

\begin{lem}\label{lem4}
For any knot $K$, the set of
metabelian representations of $\pi_1(N_K)$ can be canonically identified with the set of
representations of $\ZZ \ltimes H$.
\end{lem}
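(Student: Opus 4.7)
The plan is to observe that the lemma follows essentially immediately from the displayed isomorphism $\pi/\pi^{(2)} \cong \ZZ \ltimes H$ constructed in the discussion preceding its statement, together with the universal property of the quotient $\pi \to \pi/\pi^{(2)}$. Once that identification is in hand, metabelian representations of $\pi$ correspond tautologically to representations of $\ZZ \ltimes H$ by pullback and pushforward along the quotient map.

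More concretely, I would carry out the argument in three short steps. First, I would unpack the definition: a representation $\varrho \co \pi \to \Aut(V)$ is metabelian precisely when $\varrho|_{\pi^{(2)}}$ is trivial, which by the universal property of the quotient means exactly that $\varrho$ factors uniquely as $\varrho = \bar\varrho \circ q$, where $q \co \pi \to \pi/\pi^{(2)}$ is the canonical quotient and $\bar\varrho$ is a representation of $\pi/\pi^{(2)}$. So metabelian representations of $\pi$ are in natural bijection with representations of $\pi/\pi^{(2)}$.

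Second, I would verify that the short exact sequence
\[
1 \lto \pi^{(1)}/\pi^{(2)} \lto \pi/\pi^{(2)} \lto \pi/\pi^{(1)} \lto 1
\]
splits. Since $\pi/\pi^{(1)} = H_1(N_K;\ZZ) \cong \ZZ$ is free, a splitting is obtained by choosing a lift of a generator; the natural choice is to send $1 \mapsto \mu$, where $\mu$ is a meridian. This gives the semidirect product decomposition $\pi/\pi^{(2)} \cong \ZZ \ltimes (\pi^{(1)}/\pi^{(2)})$, where $\ZZ$ acts by conjugation by powers of $\mu$. Identifying $\pi^{(1)}/\pi^{(2)}$ with $H = H_1(N_K;\ZZ[t^{\pm 1}])$ via the map described in the text (with $t$ acting as conjugation by $\mu^{-1}$) converts this into $\ZZ \ltimes H$, and the explicit formula $g \mapsto (\ep(g), \mu^{-\ep(g)} g)$ gives the isomorphism on the nose.

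Third, I would combine these: representations of $\pi/\pi^{(2)}$ correspond via the above isomorphism to representations of $\ZZ \ltimes H$, and composition with $q$ recovers a metabelian representation of $\pi$. Conversely, any representation of $\ZZ \ltimes H$ pulls back along $q$ composed with the isomorphism to a metabelian representation of $\pi$. These two operations are mutually inverse and canonical, since $q$ and the splitting are canonically determined (up to the canonical choice of meridian class in $H_1(N_K;\ZZ)$).

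The main "obstacle," if one can call it that, is the bookkeeping in step two to make sure the $\ZZ$-action on $H$ induced by conjugation in $\pi/\pi^{(2)}$ matches the module structure $t^n \cdot g = \mu^{-n} g \mu^n$ on $H_1(N_K;\ZZ[t^{\pm 1}])$; this is a straightforward compatibility check and is implicit in the displayed formula above the lemma. Beyond that, no estimates, cohomology, or deformation theory is required — the lemma is purely a formal consequence of the metabelian quotient being $\ZZ \ltimes H$.
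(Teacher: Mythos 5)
Your proposal is correct and follows the same route as the paper: the paper's ``proof'' is precisely the displayed splitting of $1 \to \pi^{(1)}/\pi^{(2)} \to \pi/\pi^{(2)} \to \pi/\pi^{(1)} \to 1$ via the meridian, yielding $\pi/\pi^{(2)} \cong \ZZ \ltimes H$, with the factorization of metabelian representations through $\pi/\pi^{(2)}$ left implicit. You have merely spelled out the same steps in more detail.
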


When it is convenient, we will  blur the distinction between metabelian representations of $\pi_1(N_K)$ and representations of $ \ZZ \ltimes H$.


 Lemma \ref{lem4} applies to give a useful classification of the irreducible  $\SL(n,\CC)$ of $\pi_1(N_K)$, and before explaining that, we
point out two important and well--known facts that are used frequently:
\begin{enumerate}
\item[(i)]  $H=H_1(N_K;\ZZ[t^{\pm 1}])$ is finitely generated as a
$\ZZ[t^{\pm 1}]$--module and multiplication by $t-1$ is an isomorphism.
\item[(ii)] There is an isomorphism $H/(t^n-1)\cong H_1(L_n)$, where $L_n$ denotes the $n$--fold cyclic branched cover of $\Si^3$ branched along $K$.
\end{enumerate}

Suppose $\chi \co H\to \CC^*$ is a character factoring through $H/(t^n-1)$ and $z\in U(1)$ satisfies $z^n=(-1)^{n+1}$. Given $(j, h) \in \ZZ\ltimes H,$ we set
$$  \al_{(n,\chi,z)} (j,h) =
 \begin{pmatrix}
 0& &\dots &z \\
 z&0&\dots &0 \\
\vdots &\ddots &\ddots&\vdots \\
     0&\dots &z &0 \end{pmatrix}^j
     \begin{pmatrix} \chi(h) &0&\dots &0 \\
 0&\chi(th) &\dots &0 \\
\vdots &&\ddots &\vdots \\ 0&0&\dots &\chi(t^{n-1}h) \end{pmatrix}.
$$
It follows that $ \al_{(n,\chi,z)}$
defines an  $\SL(n,\CC)$ representation that factors over $\ZZ\ltimes H/(t^n-1)$ and whose  isomorphism type  is independent of the choice of $z$ (see
\cite[Section~3]{BF08}). We write $\al_{(n,\chi)}$ for $\al_{(n,\chi,z)}$.

Recall that a character $\chi \co H\to \CC^*$ has \emph{order $n$} if
it factors through $H/(t^n-1)$ but not through $H/(t^\ell-1)$ for any $\ell < n$.
Any character $\chi \co H\to \CC^*$ which factors through $H/(t^n-1)$ must have order $k$ for some
divisor $k$ of $n$.

 Given a character  $\chi \co H\to \CC^*$, let $t^i\chi$ be the character defined by $(t^i\chi)(h)=\chi(t^ih)$.
The next theorem gives a summary of the results \cite[Lemma~2.2]{BF08} and  \cite[Theorem~3.3]{BF08}.

\begin{thm} \label{thm-BF}
Suppose  $\chi \co H  \to \CC^*$ is a character that factors through $H/(t^n-1)$.
\begin{enumerate}
\item[(i)] $\al_{(n,\chi)}\co \ZZ\ltimes H\to \SL(n,\CC)$ is irreducible if and only if the character $\chi$ has order $n$.
 \item[(ii)] Given two characters $\chi,\chi'\co H\to \CC^*$ of order $n$, the representations
$\al_{(n,\chi)}$ and $\al_{(n,\chi')}$ are conjugate if and only if $\chi=t^k\chi'$ for some $k$.
\item[(iii)] For any irreducible representation  $\al \co\ZZ\ltimes H\to \SL(n,\CC)$
 there exists a character $\chi \co H\to \CC^*$ of order $n$ such that
$\al$ is conjugate to $\al_{(n,\chi)}$.
\end{enumerate}
\end{thm}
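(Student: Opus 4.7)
My strategy is to work directly with the explicit matrix formulas defining $\al_{(n,\chi)}$, using weight-space decomposition for the abelian subgroup $H$ and Schur's lemma. Let $P$ denote the scaled cyclic permutation matrix giving $\al_{(n,\chi)}(1,0)$ and $D(h) = \al_{(n,\chi)}(0,h)$ the diagonal matrix with entries $\chi(h), \chi(th), \ldots, \chi(t^{n-1}h)$. The key observation is that conjugation by $P$ cyclically permutes the diagonal of $D(h)$, which is consistent with the semidirect product relation since $\chi$ factors through $H/(t^n-1)$.

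For part (i), the joint eigenspaces of $\{D(h) : h \in H\}$ are exactly the coordinate lines $\CC e_i$ precisely when the characters $t^i\chi$ for $i = 0, \ldots, n-1$ are pairwise distinct, i.e., when $\chi$ has order $n$. In that case, any $H$-invariant subspace is a sum of coordinate lines, and $P$-invariance forces it to be $\{0\}$ or $\CC^n$. Conversely, if $\chi$ has order $k$ properly dividing $n$, then $t^k\chi = \chi$ makes the diagonal entries of $D(h)$ repeat with period $k$, so each $H$-weight space $V_{t^i\chi}$ has dimension $n/k > 1$. I would then build a proper invariant subspace by picking any proper $W_0 \subset V_\chi$ and forming $\bigoplus_{i=0}^{k-1} P^i(W_0)$, which is $H$-invariant since $H$ acts by scalars on each weight space and $P$-invariant by construction.

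For part (ii), the ``if'' direction is immediate: conjugation by a power of $P$ cyclically shifts the diagonal and intertwines $\al_{(n,t^k\chi)}$ with $\al_{(n,\chi)}$. For ``only if'', conjugate irreducible representations have equal characters, so the eigenvalue multisets $\{\chi(t^i h)\}_i$ and $\{\chi'(t^i h)\}_i$ coincide for every $h \in H$; this forces the $t$-orbits of $\chi$ and $\chi'$ in $\Hom(H,\CC^*)$ to coincide, giving $\chi' = t^k \chi$. For part (iii), given any irreducible $\al \co \ZZ \ltimes H \to \SL(n,\CC)$, I would decompose $\CC^n$ into joint weight spaces $V_\psi$ for $\al|_H$ and observe that conjugation by $\al(1,0)$ cyclically permutes the set of weight spaces. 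Irreducibility then forces the weight set to be a single $\ZZ$-orbit $\{\chi, t\chi, \ldots, t^{k-1}\chi\}$ for a character $\chi$ of order $k$, with each $V_{t^i\chi}$ of common dimension $n/k$. Choosing a basis $v_i = z^{-i}\al(1,0)^i v_0$ for a suitable $v_0 \in V_\chi$ then realizes $\al$ in the form of $\al_{(n,\chi)}$, and the constraint $\det\al(1,0) = 1$ combined with the sign $(-1)^{n-1}$ of an $n$-cycle pins down $z^n = (-1)^{n+1}$.

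The main obstacle I anticipate is the dimension-reduction step in part (iii), namely showing that each weight space is one-dimensional and hence $k = n$. The delicate point is that $\al(1,0)^k$ does not in general commute with all of $\al(H)$ when $k < n$, so Schur's lemma does not directly yield a scalar. Instead one must exploit that $H$ acts by scalars on the single weight space $V_\chi$, so that any eigenspace of $\al(1,0)^k|_{V_\chi}$ is automatically $H$-invariant; the $\al(1,0)$-translates of such an eigenvector then span a proper invariant subspace whenever $\dim V_\chi > 1$, forcing $k = n$. Once this is in place, a parallel Schur argument applied to the central element $\al(n,0)$ secures the scalar needed to normalize $z$.
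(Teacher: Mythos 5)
The paper does not actually prove this theorem: it is stated as a summary of Lemma~2.2 and Theorem~3.3 of \cite{BF08}, so there is no in-text argument to compare against. Your proposal supplies a genuine, essentially self-contained proof, and it is the standard one for representations of a group of the form $\ZZ\ltimes H$ with $H$ abelian: decompose under $H$ into weight spaces, note that $\al(1,0)$ permutes the weights along the $t$-orbit, and use irreducibility to force a single free orbit of length $n$ with one-dimensional weight spaces. Your treatment of (ii) via linear independence of characters and of (iii) via the eigenvector of $\al(1,0)^k|_{V_\chi}$ followed by Schur's lemma applied to the central element $\al(1,0)^n$ (which commutes with $\al(H)$ precisely because every weight factors through $H/(t^n-1)$) is correct, including the normalization $z^n=(-1)^{n+1}$ forced by $\det=1$ and the sign $(-1)^{n-1}$ of an $n$-cycle.

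Two small repairs. First, in the reducibility half of (i) you assert that $\bigoplus_{i=0}^{k-1}P^i(W_0)$ is ``$P$-invariant by construction'' for \emph{any} proper $W_0\subset V_\chi$; this is false, since $P$ sends $P^{k-1}(W_0)$ to $P^k(W_0)$, and $P^k$ restricted to $V_\chi$ is an automorphism that need not preserve an arbitrary $W_0$. You need $P^kW_0\subseteq W_0$, e.g.\ take $W_0$ spanned by an eigenvector of $P^k|_{V_\chi}$ (which exists over $\CC$, and is automatically $H$-invariant since $H$ acts by a scalar on $V_\chi$). This is exactly the device you correctly deploy in part (iii), so the fix is one line. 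Second, in (iii) you begin by ``decomposing $\CC^n$ into joint weight spaces for $\al|_H$,'' but a commuting family of matrices need not be semisimple, so this decomposition is not available a priori. What you do have is a single common eigenvector, hence one nonzero weight space $V_\psi$; the sum of its $\al(1,0)$-translates is then a nonzero invariant subspace, and irreducibility forces it to be all of $\CC^n$, which yields the semisimplicity a posteriori. With these two adjustments the argument is complete.
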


\begin{remark} \label{remark-regular}
Note that
$$\al_{(n,\chi)}(\mu)    = \begin{pmatrix}
  0 &  \dots & 0 &z \\
  z&0 &  \dots &0 \\
  &\ddots &\ddots & \vdots \\
    0& &z & 0
  \end{pmatrix}$$
  is conjugate to the diagonal matrix
  $$\begin{pmatrix} z&&& 0 \\
 &\om z  \\
&&\ddots  \\
     0&&&\om^{n-1} z \end{pmatrix},
$$
where $z$ satisfies $z^n = (-1)^{n+1}$ and $\om = e^{2 \pi i/n}.$ In particular, this shows under $\al_{(n,\chi)}$, the meridian is sent to a matrix with $n$ distinct eigenvalues.
\end{remark}


\subsection{Existence of irreducible metabelian SL$\boldsymbol{(n, \CC)}$ representations}
\label{sec2-3b}
In this section, we apply results of \cite{SW02} to prove a strong existence result for irreducible metabelian $\SL(n,\CC)$ representations of knot groups.

Suppose $K$ is a knot whose Alexander polynomial $\Delta_K(t)$ has a zero which is not a root of unity.
Then Kronecker's theorem implies that the Mahler measure $m$ of $\Delta_K(t)$ satisfies $m>1.$
Recall that the Mahler measure of a polynomial $f(t) \in \CC[t]$ is defined by the formula
$$m(f) =  \exp{\int_0^{2 \pi} \ln(|f(e^{i \th}|) d\th}.$$
The next proposition
was proved by Silver and Williams in \cite{SW02}, and it is an extension of earlier results of Gordon \cite[p.~365]{Gor72},
Gonz\'alez-Acu\~na--Short \cite{GAS91} and Riley \cite{Ri90}.

\begin{prop}[Theorem 2.1, \cite{SW02}] \label{SW-prop}
Let $K$ be a knot and let $m$ be the Mahler measure of $\Delta_K(t)$.
Then
$$ \lim_{n\to\infty} \frac{\ln \Tor H_1(L_n)}{n} =\ln m.$$
\end{prop}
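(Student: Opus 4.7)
The plan is to combine an algebraic computation of $|\Tor H_1(L_n)|$ as a product over $n$-th roots of unity with a classical equidistribution/Jensen's-formula argument.

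First I would establish Fox's formula. From the previous subsection, $H_1(L_n) \cong H/(t^n-1)H$ where $H = H_1(N_K;\mathbb{Z}[t^{\pm 1}])$ is the Alexander module. Since $\Delta_K(1) = \pm 1$, multiplication by $t-1$ is an isomorphism of $H$, so $H$ is a finitely generated torsion $\mathbb{Z}[t^{\pm 1}]$-module admitting a square presentation matrix $A(t)$ whose determinant equals $\Delta_K(t)$ up to a unit. Tensoring this presentation by $\mathbb{Z}[t^{\pm 1}]/(t^n-1)$ and decomposing the right-hand factor along its cyclotomic pieces will yield
\[
|\Tor H_1(L_n)| \;=\; \prod_{\substack{\zeta^n=1\\ \Delta_K(\zeta)\neq 0}} |\Delta_K(\zeta)|,
\]
the excluded $\zeta$ being exactly those that contribute free $\mathbb{Z}$-summands; their number is bounded independently of $n$ by the sum of multiplicities of the cyclotomic factors of $\Delta_K$.

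Next I would run the asymptotic analysis. Factoring $\Delta_K(t) = c\prod_j (t-\alpha_j)$ over $\mathbb{C}$ and using $\prod_{\zeta^n=1}(\zeta-\alpha)=(-1)^n(\alpha^n-1)$, taking logarithms of the Fox product gives
\[
\tfrac{1}{n}\ln|\Tor H_1(L_n)| \;=\; \ln|c| \;+\; \sum_j \tfrac{1}{n}\ln|\alpha_j^n-1| \;+\; O\!\left(\tfrac{\log n}{n}\right),
\]
with the error term absorbing the contribution of the finitely many omitted vanishing factors. I would then check the pointwise limit $\tfrac{1}{n}\ln|\alpha_j^n-1|\to \max(\ln|\alpha_j|,0)$ for each root $\alpha_j$ (the cases $|\alpha_j|>1$ and $|\alpha_j|<1$ being immediate), and invoke Jensen's formula
\[
\ln m(\Delta_K) \;=\; \ln|c| \;+\; \sum_j \max\bigl(\ln|\alpha_j|,\,0\bigr)
\]
to identify the limit with $\ln m$.

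The hardest step will be the case $|\alpha_j|=1$. When $\alpha_j$ is not a root of unity, $\alpha_j^n$ can approach $1$ arbitrarily closely, so the lower bound on $|\alpha_j^n-1|$ is non-trivial; a Gelfond--Baker type estimate gives $|\alpha_j^n-1|\geq c\,n^{-d}$ with constants depending only on $\alpha_j$, and paired with the trivial upper bound $\leq 2$ this forces $\tfrac{1}{n}\bigl|\ln|\alpha_j^n-1|\bigr| = O(\log n /n)$, which tends to zero. When $\alpha_j$ is a root of unity, the exclusion of vanishing factors from the Fox product is precisely what keeps the expression finite, and the remaining factor contributes only $O(1/n)$ to the normalized logarithm. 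Assembling these cases completes the argument.
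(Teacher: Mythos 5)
This proposition is quoted by the paper from Silver--Williams \cite{SW02} without proof, so there is no internal argument to compare against; judged on its own terms, your proposal has a genuine gap at its foundation. The identity
$|\Tor H_1(L_n)| = \prod_{\zeta^n=1,\ \Delta_K(\zeta)\neq 0}|\Delta_K(\zeta)|$
is false whenever $\Delta_K$ and $t^n-1$ share a root. Concretely, for the trefoil one has $H=\ZZ[t^{\pm1}]/(t^2-t+1)$, and since $t^2-t+1$ divides $t^6-1$, the action of $t$ on $H\cong\ZZ^2$ has order $6$, so $H_1(L_6)=H/(t^6-1)H\cong\ZZ^2$ and $|\Tor H_1(L_6)|=1$; your formula instead gives $|\Delta(1)|\,|\Delta(-1)|\,|\Delta(\om)|\,|\Delta(\om^2)|=1\cdot 3\cdot 2\cdot 2=12$. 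The point is that when $b_1(L_n)>0$ the order of the torsion subgroup is the product of the \emph{nonzero elementary divisors} of the integer matrix presenting $H/(t^n-1)H$, and this is not obtained by simply deleting the vanishing factors $\Delta_K(\zeta)$ from Fox's product: the integral structure of the cyclotomic part of the Alexander module interferes. Since both your upper and lower bounds on $\tfrac1n\ln|\Tor H_1(L_n)|$ rest entirely on this identity, the argument as written does not go through in the degenerate case (which is exactly the case the ``$\Tor$'' in the statement is there to handle). To repair it you would need to prove that the discrepancy between $|\Tor H_1(L_n)|$ and the truncated resultant is subexponential in $n$ --- plausible, since it is governed by the finitely many cyclotomic factors of $\Delta_K$, but this requires a genuine argument about elementary divisors that is absent. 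This is precisely why Silver and Williams route their proof through Pontryagin duality and periodic-point counts for algebraic dynamical systems (Lind--Schmidt--Ward) rather than through a naive Fox-type formula.

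The rest of your outline is sound and matches the classical approach of Riley and Gonz\'alez-Acu\~na--Short for the nondegenerate case: Fox's formula is correct when no $n$-th root of unity is a root of $\Delta_K$, the identity $\prod_{\zeta^n=1}(\zeta-\al)=(-1)^n(\al^n-1)$ together with Jensen's formula correctly identifies the limit with $\ln m$, and you have put your finger on the genuinely hard analytic point: for an algebraic number $\al$ on the unit circle that is not a root of unity, one needs a Gelfond/Baker-type lower bound $|\al^n-1|\geq c\,n^{-d}$ (or even just $|\al^n-1|>e^{-\ep n}$ for large $n$) to rule out exponential decay. That ingredient is correctly invoked and is the same one underlying the quasihyperbolic case in the dynamical proof. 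So the analytic skeleton is right; the algebraic input in the degenerate case is what is wrong.
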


We now explain how to apply Proposition \ref{SW-prop} to deduce a strengthened existence result for  irreducible  metabelian $\SL(n,\CC)$  representations for such knots $K$ (cf. Theorems 3.10 and 3.12 of \cite{BF08}).

\begin{thm}\label{thm:infinitelymanyreps}
Suppose $K$ is a knot such that $\Delta_K(t)$ has a zero which is not a root of unity.
Then the number of distinct conjugacy classes of
irreducible metabelian  $\SL(n,\CC)$  representations of the knot group increases exponentially
as $n \to \infty$. Consequently,  for all but finitely many ranks $n,$ there exist irreducible metabelian representations
$\al \co \pi_1(N_K) \to \SL(n,\CC).$
\end{thm}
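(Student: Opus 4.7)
The plan is to combine the classification in Theorem \ref{thm-BF} with the asymptotic growth estimate of Proposition \ref{SW-prop}. By Theorem \ref{thm-BF}, the set of conjugacy classes of irreducible metabelian $\SL(n,\CC)$ representations of $\pi_1(N_K)$ is in bijection with the orbits of the $\ZZ/n$-action $\chi\mapsto t^k\chi$ on the set of characters $\chi\co H\to\CC^*$ of order exactly $n$. I would first observe that each such orbit has size exactly $n$: if $t^k\chi=\chi$ for some $0<k<n$, then $\chi((t^k-1)h)=1$ for every $h\in H$, so $\chi$ factors through $H/(t^k-1)\cong H_1(L_k)$, contradicting the hypothesis that $\chi$ has order $n$. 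Writing $N_n$ for the number of characters of order exactly $n$, the desired count of conjugacy classes is therefore precisely $N_n/n$.

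Next, let $T_n$ denote the number of characters of $H$ factoring through $H/(t^n-1)\cong H_1(L_n)$. By Kronecker's theorem, the hypothesis on $\Delta_K(t)$ forces the Mahler measure to satisfy $m>1$. When $H_1(L_n)$ is finite one has $T_n=|H_1(L_n)|=\Tor H_1(L_n)$, and Proposition \ref{SW-prop} then yields $T_n=e^{n\ln m+o(n)}$. Since every proper divisor $d$ of $n$ satisfies $d\le n/2$, inclusion--exclusion gives
\[
N_n \;\ge\; T_n-\sum_{\substack{d\mid n\\ d<n}} T_d \;\ge\; T_n-n\,T_{n/2} \;=\; e^{n\ln m+o(n)}-e^{(n/2)\ln m+o(n)},
\]
which is still $e^{n\ln m+o(n)}$ since $m>1$. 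Dividing by $n$ shows that $N_n/n$ grows exponentially in $n$, and in particular $N_n/n\ge 1$ for all but finitely many $n$, which gives the second assertion.

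The main obstacle is ensuring that the subtraction of lower-order characters does not cancel the leading $m^n$ term; this is handled by the observation that any $d\mid n$ with $d<n$ satisfies $d\le n/2$, so the subtracted contribution is exponentially smaller than the main term. A separate subtlety arises from those $n$ in the set $\mathcal{N}:=\{n:b_1(L_n)>0\}$, for which $T_n$ is infinite. However $\mathcal{N}$ consists of the multiples of the orders of the roots of unity among the roots of $\Delta_K$, hence has density zero, so infinitely many $n$ (of positive density) lie in its complement, and the argument above applies directly to these. For $n\in\mathcal{N}$ the character group of $H_1(L_n)$ is a positive-dimensional complex torus and the characters factoring through proper quotients $H/(t^d-1)$ form a finite union of proper closed subvarieties, so uncountably many characters of order exactly $n$ remain; this yields an even stronger (in fact infinite) count, so the exponential lower bound holds a fortiori. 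Combining the two cases completes the proof.
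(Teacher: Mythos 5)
Your main case ($b_1(L_n)=0$) is essentially the paper's argument: classify conjugacy classes via Theorem \ref{thm-BF} as orbits of order-$n$ characters, bound the characters of lower order by the contributions of the proper divisors $d\le n/2$, and let Proposition \ref{SW-prop} supply the exponential gap between $m^{(1-\ep)n}$ and $m^{(1+\ep)n/2}$. (Two small points there: the inequality $T_d\le T_{n/2}$ is not literally true, since $|H_1(L_d)|$ is not monotone in $d$ and $n/2$ need not be an integer; what you actually use, and what works, is $T_d\le m^{(1+\ep)d}\le m^{(1+\ep)n/2}$ for $d\ge N$, together with a fixed constant absorbing the finitely many small divisors, which is exactly the paper's constant $D$.)

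The genuine gap is in your treatment of the ranks $n$ with $b_1(L_n)>0$. First, the set $\mathcal{N}$ of such $n$ is the set of multiples of the orders of the roots of unity among the zeros of $\De_K$, and this has \emph{positive} density (for the trefoil it is the multiples of $6$, density $1/6$), not density zero; since the theorem asserts exponential growth for all large $n$ and existence for all but finitely many $n$, these ranks cannot be set aside. Second, your argument for $n\in\mathcal{N}$ --- that the characters factoring through the proper quotients $H/(t^d-1)$ form a finite union of \emph{proper} closed subvarieties of $\Hom(H_1(L_n),\CC^*)$ --- asserts the point at issue: properness of each piece is equivalent to $(t^d-1)H\neq(t^n-1)H$, which you do not verify, and even granting it these pieces are closed subgroups, so on a fixed connected component of the (disconnected) character group their union could a priori be everything; "an irreducible variety is not a finite union of proper closed subvarieties" does not apply to the whole group. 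The paper avoids both problems by never counting characters of all of $H_1(L_n)$: it fixes a splitting $H_1(L_n)=F_n\oplus T_n$ and counts only characters of the finite torsion subgroup $T_n$, of which there are exactly $|\Tor H_1(L_n)|$; any homomorphism from $T_n$ to $H_1(L_d)$ lands in $\Tor H_1(L_d)$, so the subtraction involves only torsion orders, and Proposition \ref{SW-prop} --- which is precisely a statement about $\Tor H_1(L_n)$ --- applies uniformly to every $n$ with no case distinction. Replacing your count $T_n=|H_1(L_n)|$ by $|\Tor H_1(L_n)|$ throughout repairs the argument and collapses the two cases into one.
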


\begin{proof}
Let $K$ be a knot. Given $n\in \NN$, let  $r_n\in \NN\cup \{\infty\}$ denote the number of distinct conjugacy classes of
 irreducible metabelian  $\SL(n,\CC)$  representations of the knot group.

\begin{claim}
Let $n\in \NN$ and write $n =p_1 p_2 \cdots p_k$ for primes $p_1,\dots,p_k$. Then
\[ r_n\geq \frac{1}{n}\left(|\Tor H_1(L_n)| - \sum_{i=1}^k |\Tor H_1(L_{n/p_i})|\right).\]
\end{claim}

We write $H = H_1(N_K; \ZZ[t,t^{-1}])$. Note that given any $k|n$ we have the following commutative diagram
\[ \begin{array}{ccc} H/(t^n-1)&\xrightarrow{\cong} &H_1(L_n) \\  \downarrow &&\downarrow \\
H/(t^k-1)&\xrightarrow{\cong} &H_1(L_k).\end{array}\]
We pick once and for all a decomposition $H_1(L_n)= F_n\oplus T_n$ where $F_n$ is a free abelian group and $T_n$ is torsion.
It follows from Theorem \ref{thm-BF} (i) and (ii) that
\[ \begin{array}{rcl} r_n&\geq & \frac{1}{n} \# \{\rho\co H_1(L_n)\to T_n\to S^1\,|\,\rho \mbox{ does not  factor through some $H_1(L_k)$}\}\\[2mm]
&=&\frac{1}{n}\# \{\rho\co T_n\to S^1\, |\, \rho \mbox{ does not  factor through $T_n\to H_1(L_n)\to H_1(L_k)$}\}. \end{array}\]
Note that the number of characters of a finite group $A$ equals $|A|$. Also note that any map
$T_n\to H_1(L_k)$  necessarily factors through $\Tor H_1(L_k)$.
The claim is now an immediate consequence of these observations.

\begin{claim}
Suppose $M>1$ and $n =p_1 p_2 \cdots p_k$ for primes $p_1,\dots,p_k$.
Then
$$\sum_{i=1}^k M^{n/p_i}\leq \frac{\ln n}{\ln 2}M^{n/2}.$$
\end{claim}

Since each prime factor $p_i \geq 2$, it follows that  $p_1 \cdots p_k=n\geq 2^k$.
Thus
$$\sum_{i=1}^k M^{n/p_i} \leq \sum_{i=1}^k M^{n/2}=k M^{n/2} \leq  \frac{\ln n}{\ln 2}M^{n/2},$$
and this completes the proof of the claim.

We can now finally turn to the proof of the theorem.
Suppose  that $\Delta_K(t)$ has a zero which is not a root of unity.
Let $m$ be the  Mahler measure of $\Delta_K(t)$ and notice that Kronecker's theorem implies $m>1$.

Suppose $0<\ep < 1/3$. By Proposition \ref{SW-prop}, there exists an $N$ such that  $n \geq N$ implies
$$ \left( m^{1-\ep}\right)^n \leq | \Tor H_1(L_n)| \leq \left(m^{1+\ep}\right)^n.$$
We  write
\[ D:= \sum_{i=1}^N |\Tor H_1(L_{i})|.\]
Now let $n\geq N$. We factor $n =p_1 p_2 \cdots p_k$ where $p_1,\dots,p_k$ are primes.
If we combine the above with the first claim we see that
\[ \begin{array}{rcl}r_n&\geq &\frac{1}{n}\left(|\Tor H_1(L_n)| - \sum_{i=1}^k |\Tor H_1(L_{n/p_i})|\right)\\[2mm]
&\geq & \frac{1}{n} \left(m^{(1-\ep)n}-\sum_{i=1}^k m^{(1+\ep)n/p_i}-D\right).\end{array} \]
Applying the second claim with $M =  m^{1+\ep}$, it follows that
\[ r_n\geq \frac{1}{n} \left(m^{(1-\ep)n}-\frac{\ln n}{\ln 2}m^{(1+\ep)n/2}-D\right).\]
This shows that $r_n$ grows exponentially for sufficiently large $n$.
\end{proof}


\section{Twisted homology and cohomology}
In this section, we introduce the  twisted homology and cohomology groups and give some computations that are used throughout the paper.

\subsection{The adjoint representation}

In this subsection, we show how, given a metabelian representation,  its   adjoint representation decomposes as a direct sum of  simple representations.

\begin{lemma}\label{lem-a}
Let $K$ be a knot, $n\in \NN$, and $\chi \co H_1(L_n)\to \CC^*$
a character.
Set $\al=\al_{(n,\chi)}$
and let $\th_1\co \pi_1(N_K)\to \GL(1,\CC)$ denote the trivial representation.
Let $\al_n\co \pi_1(N_K)\to \Aut(\CC[\ZZ/n])$ be the regular representation corresponding to the canonical projection map
$\pi_1(N_K)\to \ZZ\to \ZZ/n$, and let
$\ad \al\co \pi_1(N_K)\to \Aut(sl(n,\CC))$ denote the adjoint representation. Then we
have
 the following isomorphism of representations:
$$ \ad \al \oplus \th_1\cong \al_n \oplus \bigoplus\limits_{i=1}^{n-1} \be_{(n,\chi_i)},$$
where $\chi_i$ is the character defined by $\chi_i(v):=\chi(v)^{-1}\chi(t^{i}v)$.
Further, if $\chi$ is a character of order $n$, then $\chi_1,\dots,\chi_{n-1}$ are also characters of order $n$.
\end{lemma}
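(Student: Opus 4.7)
The plan is to identify $\ad\al \oplus \th_1$ with the conjugation representation on $gl(n,\CC) = \mathrm{End}(\CC^n) \cong \al \otimes \al^*$, and then to decompose it using the eigenspace decomposition for the adjoint action of $\al(\mu)$. By Remark \ref{remark-regular}, $\al(\mu)$ is conjugate to the diagonal matrix with entries $z,\om z,\ldots,\om^{n-1}z$ with $\om = e^{2\pi i/n}$, so the adjoint action on $gl(n,\CC)$ has each $n$-th root of unity $\om^k$ as an eigenvalue with multiplicity $n$. In the matrix-unit basis, the $\om^k$-eigenspace is $V_k := \mathrm{span}\{E_{i,i+k} : i = 1,\ldots,n\}$, with indices taken mod $n$. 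Since $\al(h)$ is diagonal, each $V_k$ is also stable under conjugation by $h$, yielding a decomposition $gl(n,\CC) = \bigoplus_{k=0}^{n-1} V_k$ as $\pi_1(N_K)$-modules.

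Next I would identify each $V_k$ with the claimed summand. For $k=0$, the diagonal matrices are fixed pointwise under conjugation by $h\in H$ and are cyclically permuted by conjugation by $\mu$, reproducing the regular representation $\al_n$ of $\ZZ/n$ pulled back via $\pi_1(N_K) \to \ZZ/n$. For $k \neq 0$, a direct computation shows that conjugation by $\mu$ sends $E_{i,i+k} \mapsto E_{i+1,i+1+k}$, while conjugation by $h$ scales $E_{i,i+k}$ by $\chi(t^{i-1}h)\chi(t^{i+k-1}h)^{-1}$; this scalar rewrites as $\chi_k^{-1}(t^{i-1}h)$. Comparing with the defining formula of $\al_{(n,\psi)}$ and using Theorem \ref{thm-BF}(ii) to replace $\chi_k^{-1}$ by the equivalent character $\chi_{n-k}$ in its $t$-orbit, one obtains $V_k \cong \be_{(n,\chi_{n-k})}$; summing and reindexing $k \leftrightarrow n-k$ yields the asserted decomposition. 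A cleaner derivation uses $\al = \mathrm{Ind}_H^G\chi$ and the Mackey tensor formula $\al \otimes \al^* = \bigoplus_{k=0}^{n-1} \mathrm{Ind}_H^G(\chi\cdot t^k\chi^{-1})$, whose $k=0$ summand is $\al_n$ and whose $k \neq 0$ summands are the $\be_{(n,\chi_i)}$ after reindexing.

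For the order claim, note that $\chi_i$ has order dividing $d$ iff $(t^d-1)(t^i-1)\chi = 0$ in the character group of $H$, i.e.\ $(t^d-1)(t^i-1)H \subset \ker\chi$. Since $\Delta_K(1)=\pm 1$, multiplication by $t-1$ is an isomorphism on $H$, so $t-1$ acts as a unit in the quotient ring $R := \ZZ[t^{\pm 1}]/\mathrm{Ann}(\chi)$ (by Bezout applied to $\gcd(t-1,\Delta_K)=1$ in $\ZZ[t]$). Using this, the condition becomes equivalent to $q_d(t)q_i(t) = 0$ in $R$, where $q_j(t) := 1 + t + \cdots + t^{j-1}$. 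I would then argue that the assumption that $\chi$ has order exactly $n$ forces $t$ to have multiplicative order $n$ in $R$, and use the cyclotomic factorization of $R \otimes \QQ$ to preclude the zero-divisor relation $q_d q_i = 0$ in $R$ for $1 \le d < n$ and $1 \le i \le n-1$.

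The main obstacle is this last step: the decomposition in the first two paragraphs is essentially formal once $\al$ is recognized as an induced representation, but proving the order claim genuinely requires the Alexander module hypothesis $\Delta_K(1)=\pm 1$ together with a careful analysis of the cyclic submodule $\ZZ[t^{\pm 1}]\chi$ of the character group of $H$ via its cyclotomic structure. Indeed, the order claim fails for abstract $\ZZ[t^{\pm 1}]$-modules in which $t-1$ is not invertible (for example $\ZZ[i]$ with $t$ acting by multiplication by $i$, where $\chi_2$ can have order strictly less than $n$), so the knot Alexander module structure is essential here.
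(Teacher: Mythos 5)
Your decomposition argument is correct and is essentially the paper's own proof: both split the conjugation action on $gl(n,\CC)\cong \ad \al \oplus \th_1$ along the subspaces $V_k$ spanned by the matrix units $E_{i,i+k}$, identify $V_0$ with $\al_n$, and identify $V_k$ for $k\neq 0$ with $\be_{(n,\chi_{n-k})}$ by exactly the diagonal/cyclic--shift computation you carry out (your Mackey reformulation is an equivalent packaging). One small caveat: the $V_k$ are eigenspaces of the adjoint action of $\al(\mu)$ only in the basis that diagonalizes $\al(\mu)$, which is not the basis in which $\al(h)$ is diagonal; in the basis you actually compute with, the $V_k$ are merely invariant (their basis vectors are cyclically permuted by $\al(\mu)$), which is all you need.

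The genuine gap is the final step of your treatment of the order claim. Passing to $R\otimes\QQ$ cannot rule out the relation $q_d(t)q_i(t)=0$ in $R=\ZZ[t^{\pm 1}]/\mathrm{Ann}(\chi)$: precisely in the situations the paper cares about, $H_1(L_n)$ is finite, hence $R$ is a finite ring and $R\otimes\QQ=0$, so its cyclotomic factorization sees nothing --- the potential zero--divisor relation lives entirely in torsion. In fact the relation genuinely occurs once $n$ has two distinct prime factors. For $K=5_2$ (so $\De_K(t)=2t^2-3t+2$) and $n=6$ one finds $H_1(L_6)\cong \ZZ/7\oplus{\mathbb F}_{25}$ with $t$ acting by $(-1,\zeta_3)$; taking $\chi=(\chi_7,\chi_5)$ with both factors nontrivial gives a character of order $6$, yet $\chi_2=(t^2-1)\chi$ annihilates the $\ZZ/7$ summand and has order $3$. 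So no amount of exploiting $\De_K(1)=\pm 1$ will close your last step: your instinct that this hypothesis is the essential input is right as far as it goes (the claim does hold, easily, for $n$ prime, where $\gcd(t^c-1,t^n-1)=(t-1)$ together with injectivity of $t-1$ on $\Hom(H,\CC^*)$ forces $(t^i-1)\chi=1$), but for composite $n$ the ``Further'' assertion cannot be established as stated. For what it is worth, the paper's own proof is silent on the order claim, and every place Lemma \ref{lem-a} is invoked (Proposition \ref{prop-finite}, Theorem \ref{thm2}) uses only that $\chi_1,\dots,\chi_{n-1}$ are nontrivial, which is immediate: $(t^i-1)\chi=1$ would force the order of $\chi$ to divide $i<n$.
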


\begin{proof}
Write $\pi=\pi_1(N_K)$ as before
and let $\be\co \pi\to \Aut(gl(n,\CC))$ denote the adjoint representation of $\al$
on $gl(n,\CC)$,
so $\be(g)(A)=\al(g)A\al(g)^{-1}$ for $g\in \pi$ and $A\in gl(n,\CC)$.
Note that $gl(n,\CC)=sl(n,\CC)\oplus \CC\cdot I$. It follows immediately that
$\be=\ad \al \oplus \th_1$ splits off a trivial factor. It therefore suffices to show that
$$\be \cong \al_n \oplus \bigoplus\limits_{i=1}^{n-1} \be_{(n,\chi_i)}.$$
For $i=0,\dots,n-1,$ let $V_i$ be the set of all matrices
$(a_{jk})$ such that $a_{jk}=0$ unless $j-k \equiv i \mod{n}$.
It is not difficult to see that the action of $\pi$ on $gl(n,\CC)$ restricts to actions
on $V_0,V_1,\dots,V_{n-1}$.
 We equip $V_i$ with the ordered basis
$\{ e_{i+1,1},e_{i+2,2},\dots,e_{i+n,n}\}$, where the indices are taken modulo $n$.
The restriction of $\be$ to ${V_i}$ can then be calculated with respect to this basis and
$\alpha_{(n,
\chi,z)}(j,h)=z^j\beta_{(n,\chi)}(i,h)$
 and the $z^i$ disappears upon conjugation.

Note that   $\chi_0$ is the trivial character, and therefore
$\be_{(n,\chi_0)}=\al_n$.
\end{proof}

As a side note we record a corollary on twisted Alexander polynomials.
Recall that given a knot $K$ and a complex representation $\gamma$ we can consider the corresponding
twisted Alexander polynomial $\Delta_K^\gamma\in \CC(t)$. We refer to \cite{Wa94,FV10} for details.
We obtain the following corollary.

\begin{cor}
Let $K$ be an oriented knot , $n\in \NN$ and $\chi \co H_1(L_n)\to \CC^*$
be a character.
Suppose $\al=\al_{(n,\chi)}$. Then
\[ \Delta_K^{\ad \al}\doteq \prod\limits_{j=1}^{n-1} \Delta_K(e^{2\pi ij/n}t)\cdot \prod\limits_{j=1}^{n-1} \Delta_K^{\beta(n,\chi_i)}.\]
\end{cor}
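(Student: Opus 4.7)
The plan is to derive the formula by combining the direct-sum decomposition of $\ad\al\oplus\th_1$ established in Lemma \ref{lem-a} with the well-known multiplicativity of the twisted Alexander polynomial under direct sums. Recall that for any two finite-dimensional complex representations $\gamma_1,\gamma_2$ of $\pi_1(N_K)$ one has $\Delta_K^{\gamma_1\oplus\gamma_2}\doteq \Delta_K^{\gamma_1}\cdot \Delta_K^{\gamma_2}$ (see \cite{Wa94,FV10}). Applying this to the isomorphism from Lemma \ref{lem-a}, I get
$$\Delta_K^{\ad\al}\cdot \Delta_K^{\th_1}\doteq \Delta_K^{\al_n}\cdot \prod_{i=1}^{n-1}\Delta_K^{\be_{(n,\chi_i)}}.$$
So the task reduces to computing $\Delta_K^{\al_n}$ and then cancelling the $\Delta_K^{\th_1}$ factor on the left.

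Next I would decompose the regular representation of $\ZZ/n$ into its $n$ one-dimensional characters $\rho_0,\rho_1,\dots,\rho_{n-1}$, where $\rho_j(\mu)=e^{2\pi ij/n}$ (here $\mu$ is the meridian, so $\al_n$ is the pullback along $\pi_1(N_K)\to \ZZ\to \ZZ/n$). Multiplicativity again gives
$$\Delta_K^{\al_n}\doteq \prod_{j=0}^{n-1}\Delta_K^{\rho_j}\doteq \Delta_K^{\rho_0}\cdot\prod_{j=1}^{n-1}\Delta_K(e^{2\pi ij/n}t),$$
using the standard identity $\Delta_K^{\rho_j}(t)\doteq \Delta_K(e^{2\pi ij/n}t)$ for twisting by a one-dimensional character that sends $\mu\mapsto e^{2\pi ij/n}$.

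Since $\rho_0$ is the trivial one-dimensional representation, $\Delta_K^{\rho_0}=\Delta_K^{\th_1}$, and substituting into the previous identity cancels $\Delta_K^{\th_1}$ from both sides, yielding exactly the asserted formula. The argument is essentially bookkeeping, and the only point requiring care is to ensure consistency of conventions for the twisted Alexander polynomial so that both $\Delta_K^{\rho_0}\doteq \Delta_K^{\th_1}$ and $\Delta_K^{\rho_j}(t)\doteq \Delta_K(e^{2\pi ij/n}t)$ hold as stated; in Wada's normalization (which is what \cite{Wa94,FV10} use) both are immediate, so the proof is then purely formal.
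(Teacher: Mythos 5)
Your proof is correct and follows exactly the route the paper intends: the corollary is stated as an immediate consequence of Lemma \ref{lem-a} via multiplicativity of the twisted Alexander polynomial under direct sums, the splitting of the regular representation $\al_n$ into one-dimensional characters, and cancellation of the trivial factor. The paper gives no written proof at all, so your careful bookkeeping (including the remark about normalization conventions) supplies precisely the missing details.
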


This corollary generalizes a recent theorem of Yamaguchi
\cite{Ya11} which says  that in the case $n=2$ the polynomial $\Delta_K(-t)$ divides $\Delta_K^{\ad \al}$.

\subsection{Twisted homology and cohomology}

In this subsection, we recall the  twisted homology and cohomology groups and summarize their  basic properties.

Let $(X,Y)$ be a pair of topological spaces, $V$ a finite dimensional complex vector space and $\al \co \pi_1(X)\to \Aut(V)$ a representation.
Denote by $p\co \widetilde{X}\to X$ the universal covering and set $\widetilde{Y}:=p^{-1}(Y)$.
Using the representation, we can regard $V$ as a left $\ZZ[\pi]$--module, where $\pi=\pi_1(X)$.
The chain complex $C_*(\widetilde{X},\widetilde{Y})$ is also a left $\ZZ[\pi]$--module via deck transformations and we  form
the twisted cohomology groups
$$ H^*(X,Y;V_\al)=H_*(\Hom_{\ZZ[\pi]}(C_*(\widetilde{X},\widetilde{Y}),V)).$$
 Using the natural involution $g\mapsto g^{-1}$ on the group ring $\ZZ[\pi]$, we can view $C_*(\widetilde{X},\widetilde{Y})$ as a right $\ZZ[\pi]$--module, and we can form
the twisted homology groups
$$ H_*(X,Y;V_\al)=H_*(C_*(\widetilde{X},\widetilde{Y})\otimes_{\ZZ[\pi]}V).$$

The groups $H^0$ and $H_0$ can be computed
immediately from the fundamental group (cf. \cite[Section~VI]{HS97}):
\begin{equation} \label{eq1}
\begin{array}{rcl} H^0(X;V_\al)&=&\{ v\in V \mid \al(g)v=v\text{ for all }g\in \pi\},\\
 H_0(X;V_\al)&=&V/ \sim, \text{ where $\al(g)v \sim v$ for all  $v\in V, g\in \pi$.}
 \end{array}
 \end{equation}
If $M$ is an $n$--manifold, then Poincar\'e duality implies
$$ H_i(M;V_\al)\cong H^{n-i}(M,\partial M;V_\al) \text{ and } H_i(M,\partial M;V_\al)\cong H^{n-i}(M;V_\al).$$

The next two lemmas are both well--known and therefore stated without proof. For more details, see \cite[Lemma~2.3]{FK06}.

\begin{lem}\label{lem-b}
Suppose that  $V$ is equipped with a bilinear non--singular form, and
that $\al$ is orthogonal with respect to this form.
Then
$$ H_i(X,Y;V_\al)\cong H^{i}(X,Y;V_\al)$$
for any $i$.
The same conclusion holds in the case $V$ has a non--singular hermitian form and
$\al$ is unitary with respect to this form.
\end{lem}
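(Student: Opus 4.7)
The plan is to reduce the statement to the universal coefficient theorem over $\CC$ by using the invariant form on $V$ to identify $V$ with its dual as a $\ZZ[\pi]$--module, where $\pi=\pi_1(X)$.

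In the bilinear case, the non--singular form $b\co V\otimes V\to \CC$ yields a $\CC$--linear isomorphism $\Phi_b\co V\to V^*$, $v\mapsto b(v,-)$. The orthogonality condition $b(\al(g)v,\al(g)w)=b(v,w)$ is equivalent, after transposing, to the statement that $\Phi_b$ intertwines $\al$ with the contragredient representation $\al^*\co g\mapsto \al(g^{-1})^T$ on $V^*$. Hence $V_\al\cong V^*_{\al^*}$ as $\ZZ[\pi]$--modules, and in particular
$$H^i(X,Y;V_\al)\cong H^i(X,Y;V^*_{\al^*}).$$

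Next I would invoke the standard tensor--hom adjunction
$$\Hom_{\ZZ[\pi]}\bigl(C_*(\widetilde{X},\widetilde{Y}),\,V^*\bigr)\,\cong\,\Hom_\CC\bigl(C_*(\widetilde{X},\widetilde{Y})\otimes_{\ZZ[\pi]}V,\,\CC\bigr),$$
where on the right $C_*(\widetilde{X},\widetilde{Y})$ is viewed as a right $\ZZ[\pi]$--module via the involution $g\mapsto g^{-1}$. This identifies the cochain complex computing $H^*(X,Y;V^*_{\al^*})$ with the $\CC$--linear dual of the chain complex computing $H_*(X,Y;V_\al)$. The universal coefficient theorem over the field $\CC$ degenerates to $H^i(\Hom_\CC(C_*,\CC))\cong \Hom_\CC(H_i(C_*),\CC)$; combined with finite--dimensionality of the twisted homology, this yields $H^i(X,Y;V_\al)\cong H_i(X,Y;V_\al)$ as $\CC$--vector spaces.

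For the Hermitian case, the form $b$ is sesquilinear, and therefore furnishes a conjugate--linear $\pi$--equivariant isomorphism $V\to V^*$, equivalently a $\CC$--linear equivariant isomorphism $\overline{V}_\al\cong V^*_{\al^*}$, where $\overline{V}$ denotes the complex conjugate vector space equipped with the same action $\al$ (which remains $\CC$--linear on $\overline V$). Repeating the previous argument yields
$$H^i(X,Y;\overline{V}_\al)\cong \Hom_\CC\bigl(H_i(X,Y;V_\al),\,\CC\bigr),$$
and since $H^*(X,Y;\overline{V}_\al)$ is, by definition, the complex conjugate of $H^*(X,Y;V_\al)$ as a $\CC$--vector space, it has the same complex dimension, giving the claimed isomorphism. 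The only real obstacle is the bookkeeping of module conventions: tracking the involution on $\ZZ[\pi]$ that converts the left $\ZZ[\pi]$--structure on $C_*(\widetilde X,\widetilde Y)$ to a right one, and, in the Hermitian setting, keeping the conjugate--linear identifications straight. Once these conventions are pinned down the argument is entirely formal.
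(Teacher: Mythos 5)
Your proof is correct. The paper itself gives no proof of this lemma --- it is stated as well known, with a pointer to \cite[Lemma~2.3]{FK06} --- and your argument (the invariant form gives a $\ZZ[\pi]$--isomorphism $V_\al \cong V^*_{\al^*}$, the tensor--hom adjunction identifies $\Hom_{\ZZ[\pi]}(C_*,V^*)$ with the $\CC$--dual of $C_*\otimes_{\ZZ[\pi]}V$, and the universal coefficient theorem over the field $\CC$ finishes the job) is exactly the standard route such a reference takes. Two small points worth pinning down if you write this out in full: the final step $\Hom_\CC(H_i,\CC)\cong H_i$ uses finite--dimensionality of the twisted homology, which holds here because the pairs in play ($N_K$, $\partial N_K$, and $(N_K,\partial N_K)$) admit finite CW structures; and in the Hermitian case your identification $\Hom_{\ZZ[\pi]}(C_*,\overline V)=\overline{\Hom_{\ZZ[\pi]}(C_*,V)}$ (same underlying additive maps, conjugate $\CC$--structure) is the clean way to see that the conjugate coefficients do not change the dimension.
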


Consider the map defined for $A,B \in sl(n,\CC)$ by the assignment
$$(A,B)\mapsto -\tr (AB).$$
This map defines a non--singular, symmetric, bilinear form on $sl(n,\CC)$ called the Killing form.
The next lemma says that the hypotheses of Lemma \ref{lem-b}
are satisfied for the adjoint representation.
\begin{lem}\label{lem-c}
For any $\al \co \pi\to \SL(n,\CC)$, its adjoint representation   $\ad \al \co \pi \to \Aut(sl(n,\CC))$ is orthogonal with respect to the Killing form.
\end{lem}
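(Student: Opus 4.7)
The plan is to verify the defining identity of orthogonality directly from the definition of the Killing form and the cyclic invariance of the trace. Orthogonality of $\ad \al$ with respect to the form $(A,B)\mapsto -\tr(AB)$ means that for every $g\in\pi$ and every $A,B\in sl(n,\CC)$ one has
$$-\tr\bigl(\ad\al(g)(A)\cdot \ad\al(g)(B)\bigr) = -\tr(AB).$$
So the first (and essentially only) step is to expand the left-hand side using $\ad\al(g)(X) = \al(g)\,X\,\al(g)^{-1}$ and simplify the product $\al(g)A\al(g)^{-1}\al(g)B\al(g)^{-1} = \al(g)\,AB\,\al(g)^{-1}$.

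Then the identity reduces to $\tr(\al(g)\,AB\,\al(g)^{-1}) = \tr(AB)$, which is the standard cyclicity/conjugation-invariance of the trace on $gl(n,\CC)$ (valid for any invertible $\al(g)\in\GL(n,\CC)$, and in particular for $\al(g)\in\SL(n,\CC)$). Bilinearity, symmetry, and non-singularity of the Killing form on $sl(n,\CC)$ are classical facts about $sl(n,\CC)$ and do not need to be re-proved here; the only claim in the lemma is the invariance of the form under $\ad\al$.

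There is no real obstacle. The one tiny point worth flagging is that we use the form on $sl(n,\CC)$ rather than on $gl(n,\CC)$; since $\ad\al$ preserves $sl(n,\CC)$ (conjugation preserves trace, so traceless matrices go to traceless matrices), the restriction of the form and the representation to $sl(n,\CC)$ are both well-defined, and the above computation, carried out in $gl(n,\CC)$, applies a fortiori to elements of $sl(n,\CC)$. This completes the verification.
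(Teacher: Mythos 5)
Your proof is correct: the computation $\tr\bigl(\al(g)A\al(g)^{-1}\cdot\al(g)B\al(g)^{-1}\bigr)=\tr\bigl(\al(g)AB\al(g)^{-1}\bigr)=\tr(AB)$ is exactly the standard argument, and your remark that conjugation preserves tracelessness (so the form and representation restrict to $sl(n,\CC)$) covers the only point needing care. The paper states this lemma without proof as a well-known fact, and your verification is the intended one.
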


\subsection{Calculations}

This subsection  presents  some calculations of twisted homology and cohomology groups
that will be used in proving the main results.

\begin{lem}\label{lem-torus-V}
Let $K$ be a knot, $\chi \co H_1(L_n) \to \CC^*$  a character and $z\in U(1)$. Let $V=\CC^n$
and $\al=\al_{(n,\chi,z)}\co \pi_1(N_K) \to \Aut(V)$,
and  set $\widehat{\al}$ to be the restriction of $\al$ to $\pi_1(\partial N_K)$.
If $z^n=1$, then  the following hold:
$$ \begin{array}{cccccc}  \dim H^0(\partial N_K; V_{\widehat{\al}})& =& \dim H_0(\partial N_K; V_{\widehat{\al}})&=& 1,\\
 \dim H^1(\partial N_K; V_{\widehat{\al}})& =& \dim H_1(\partial N_K; V_{\widehat{\al}})&=& 2,\\
\dim H^2(\partial N_K; V_{\widehat{\al}})& =& \dim H_2(\partial N_K; V_{\widehat{\al}})&=& 1.\end{array}  $$
\end{lem}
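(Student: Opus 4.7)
The plan is to extract everything from a direct computation of $H^0$ and $H_0$ via formula \eqref{eq1}, then invoke Poincar\'e duality on the closed orientable surface $\partial N_K\cong T^2$ and the vanishing of its Euler characteristic.

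First I would analyze the two commuting matrices $\widehat{\al}(\mu), \widehat{\al}(\lambda)\in\Aut(V)$. Reading off the definition, $\widehat{\al}(\mu)$ is $z$ times the cyclic permutation sending $e_i\mapsto e_{i+1}$, so its eigenvalues are $z,z\om,\ldots,z\om^{n-1}$ with $\om=e^{2\pi i/n}$. Under the hypothesis $z^n=1$, these are precisely the $n$-th roots of unity in some order, so $1$ is a simple eigenvalue and $\widehat{\al}(\mu)-I$ is a diagonalizable matrix of rank $n-1$. For the longitude, I would note that $\lambda\in\pi_1(N_K)^{(1)}$, so under the splitting $\pi/\pi^{(2)}\cong\ZZ\ltimes H$ of Lemma \ref{lem4} it is sent to a pair $(0,\ell)$ with $\ell\in H$. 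The classical fact that a Seifert surface for $K$ lifts to the infinite cyclic cover $\widetilde{N}_K$ with boundary a lift of $\lambda$ shows that $\ell=0$ in $H=H_1(\widetilde{N}_K)$, and hence $\widehat{\al}(\lambda)=I$.

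Once $\widehat{\al}(\lambda)=I$ is in place, formula \eqref{eq1} gives
$$H^0(\partial N_K;V_{\widehat{\al}})=\ker(\widehat{\al}(\mu)-I)\quad\text{and}\quad H_0(\partial N_K;V_{\widehat{\al}})=V/\mathrm{im}(\widehat{\al}(\mu)-I),$$
both $1$-dimensional by the eigenvalue analysis. Since $T^2$ is a closed oriented surface, Poincar\'e duality with twisted coefficients yields $H^2(\partial N_K;V_{\widehat{\al}})\cong H_0(\partial N_K;V_{\widehat{\al}})$ and $H_2\cong H^0$, so each has dimension $1$. Finally, the identity $\chi(T^2;V_{\widehat{\al}})=\dim V\cdot \chi(T^2)=0$ forces $\dim H_1=\dim H_0+\dim H_2=2$, and the analogous cohomological identity gives $\dim H^1=2$.

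The main obstacle I anticipate is justifying $\widehat{\al}(\lambda)=I$, which rests on the topological input that the preferred longitude is nullhomologous in $\widetilde{N}_K$; once that is accepted, the rest of the argument is linear algebra on the single matrix $\widehat{\al}(\mu)-I$, together with the standard duality and Euler-characteristic package for closed surfaces.
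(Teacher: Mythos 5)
Your proposal is correct and follows essentially the same route as the paper: identify the eigenvalues of $\widehat{\al}(\mu)$ as the $n$ distinct $n$-th roots of unity (so exactly one eigenvalue equals $1$), note $\widehat{\al}(\la)=I$, compute $H^0$ and $H_0$ from Equation (\ref{eq1}), and finish with Poincar\'e duality and the vanishing Euler characteristic of the torus. The only difference is that you supply the justification that $\la$ is nullhomologous in the infinite cyclic cover (hence $\widehat{\al}(\la)=I$), a step the paper simply asserts; this is a correct and welcome addition rather than a change of method.
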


\begin{proof}
We let $\mu$ and $\la$ be the meridian and longitude of $K$.
Note that $\al(\la)$ is trivial and that $\al(\mu)$ is diagonal with eigenvalues
$z,ze^{2\pi i/n},\dots,ze^{2\pi i(n-1)/n}$, which are distinct. Note that $\al(\mu)$ has precisely one eigenvalue which equals one.
A direct calculation using Equation (\ref{eq1}) shows that $H^0(\partial N_K,V_{\widehat{\al}})=\CC$ and $H_0(\partial N_K,V_{\widehat\al})=\CC$, and
duality gives that $H_2(\partial N_K;V_{\widehat{\al}})=\CC$ and $H^2(\partial N_K; V_{\widehat{\al}})=\CC$.
Since the Euler characteristic of the torus $\partial N_K$ is zero we see that
$\dim H^1(\partial N_K; V_{\widehat{\al}}) = \dim H_1(\partial N_K; V_{\widehat{\al}})= 2$.
\end{proof}

\begin{lem}\label{lem-knot-V}
Let $K$ be a knot.
For $i=1,\dots,\ell,$ let  $\chi_i\co H_1(L_n) \to \CC^*$  be a non--trivial character and $z_i\in U(1)$ with $z_i^n=1$. Let $V = \CC^{n \ell}$ and
consider the representation $\al=\bigoplus_{i=1}^\ell \al_{(n,\chi_i,z_i)}\co \pi_1(N_K) \to \Aut(V)$. Then the following hold:
\begin{enumerate}
\item[(i)] $\dim H^0(N_K; V_\al)=0$,
\item[(ii)] if $\al$ is orthogonal or unitary with respect to a non--singular form on $V$,
then
$\dim H^1(N_K;V_\al)\geq \ell$.
\end{enumerate}
\end{lem}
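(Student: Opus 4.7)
The plan is to prove the two parts separately. For part~(i), I would use Equation~(\ref{eq1}) directly: since cohomology commutes with direct sums, it suffices to show that each summand $V_i=\CC^n$, on which $\pi=\pi_1(N_K)$ acts through $\al_i:=\al_{(n,\chi_i,z_i)}$, has no non-zero invariant vectors. The restriction of $\al_i$ to $H=H_1(N_K;\ZZ[t^{\pm 1}])$ is the diagonal matrix with entries $\chi_i(t^jh)$, so any invariant vector $v\in V_i$ must satisfy $\chi_i(t^jh)v_j=v_j$ for every $j$ and every $h\in H$. Because $h\mapsto t^jh$ is a bijection of $H$, the character $t^j\chi_i$ is non-trivial whenever $\chi_i$ is, so each coordinate $v_j$ must vanish. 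Summing over $i$ gives $H^0(N_K;V_\al)=0$.

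For part~(ii), I would use a half-lives-half-dies type argument, deriving the lower bound from the boundary computation in Lemma~\ref{lem-torus-V} combined with Poincar\'e-Lefschetz duality and the vanishing of the Euler characteristic. Applying Lemma~\ref{lem-torus-V} to each summand gives
\[ \dim H^0(\partial N_K;V_\al)=\ell,\qquad \dim H^1(\partial N_K;V_\al)=2\ell,\qquad \dim H^2(\partial N_K;V_\al)=\ell.\]
Combining with part~(i), the initial segment of the long exact sequence of the pair $(N_K,\partial N_K)$ yields an injection $H^0(\partial N_K;V_\al)\hookrightarrow H^1(N_K,\partial N_K;V_\al)$, whence $\dim H^1(N_K,\partial N_K;V_\al)\geq \ell$.

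Next, the orthogonal/unitary hypothesis allows me to apply Lemma~\ref{lem-b} to identify twisted homology with twisted cohomology; combining this with Poincar\'e-Lefschetz duality for the orientable 3-manifold $N_K$ gives
\[ H^1(N_K,\partial N_K;V_\al)\;\cong\;H_2(N_K;V_\al)\;\cong\; H^2(N_K;V_\al),\]
so $\dim H^2(N_K;V_\al)\geq \ell$. Finally, $N_K$ is aspherical with toral boundary, so it has the homotopy type of a 2-complex, giving $H^3(N_K;V_\al)=0$, and its Euler characteristic vanishes. Therefore
\[ \dim H^1(N_K;V_\al)=\dim H^0(N_K;V_\al)+\dim H^2(N_K;V_\al)-\dim H^3(N_K;V_\al)\geq \ell,\]
using $H^0(N_K;V_\al)=0$ from part~(i). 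The only step requiring any genuine care is the duality identification, which is supplied by Lemmas~\ref{lem-b} and~\ref{lem-c}; the rest is bookkeeping with the long exact sequence and the Euler characteristic.
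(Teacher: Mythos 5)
Your part~(i) is the same as the paper's (the paper simply cites Equation~(\ref{eq1}) and the non-triviality of the $\chi_i$; your elaboration via the diagonal action of $H$ is exactly what is meant). For part~(ii) your argument is correct but routes through a different portion of the long exact sequence than the paper does. The paper looks at the exact segment $H^1(N_K;V_\al)\to H^1(\partial N_K;V_\al)\to H^2(N_K,\partial N_K;V_\al)$: since the middle term has dimension $2\ell$, one of the two outer terms has dimension at least $\ell$, and Lemma~\ref{lem-b} together with Poincar\'e duality identifies $\dim H^2(N_K,\partial N_K;V_\al)$ with $\dim H^1(N_K;V_\al)$, so either alternative gives the bound directly. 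You instead use the degree-zero segment to get the injection $H^0(\partial N_K;V_\al)\hookrightarrow H^1(N_K,\partial N_K;V_\al)$, dualize to obtain $\dim H^2(N_K;V_\al)\geq\ell$, and then transfer the bound to $H^1$ via the vanishing of the twisted Euler characteristic and of $H^3(N_K;V_\al)$. Both are ``half-lives-half-dies'' arguments resting on Lemmas~\ref{lem-torus-V}, \ref{lem-b} and duality; yours needs two additional (standard, and correctly justified) inputs --- that $N_K$ has the homotopy type of a $2$-complex and that $\chi(N_K)=0$, so that $\sum_i(-1)^i\dim H^i(N_K;V_\al)=0$ --- whereas the paper's version avoids these at the cost of a slightly less linear ``one of the two ends is large, and they are equal'' step. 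There is no gap in your argument.
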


\begin{proof}
The first statement is an immediate consequence of Equation (\ref{eq1}) and the assumption that $\chi_i$ are non-trivial.
By Lemma \ref{lem-torus-V} we have  $\dim H_1(\partial N_K;V_\al)=2 \ell$. Now consider the following short exact sequence
$$ H^1(N_K;V_\al) \lto H^1(\partial N_K;V_\al)\lto H^2(N_K,\partial N_K;V_\al). $$
It follows that either $\dim H^1(N_K;V_\al)\geq \ell$ or $\dim H^2(N_K,\partial N_K;V_\al)\geq \ell$.
But by Poincar\'e duality and by Lemma \ref{lem-b} the latter also equals $\dim H^1(N_K;V_\al)$.
\end{proof}

\section{Main Results}
In this section we establish the results discussed in the introduction. In \S \ref{sec-coh}, we present  cohomology arguments showing $\dim H^1(N_K; sl(n,\CC)_{\ad \al}) \geq n-1$  for any irreducible metabelian representation
$\al \co \pi_1 (N_K) \to \SL(n,\CC)$. In \S \ref{sec-dim}, we prove  that any algebraic component $X_j$
of $X_n(N_K)$ has dimension $\dim X_j \geq n-1$, in case $X_j$ contains
the character of a regular representation $\al\co\pi_1(N_K) \to \SL(n,\CC)$.
This is a generalization to $\SL(n,\CC)$ of a theorem due to Thurston for $\SL(2, \CC)$ (see \cite[Theorem 3.2.1]{CS83}).

At this point, we make the assumption that $\dim H^1(N_K; sl(n,\CC)_{\ad \al}) = n-1$.
Using this condition, we show in \S \ref{sec-simple} that every irreducible metabelian character $\xi_\al$ is a simple point of the character variety $X_n(N_K)$.
In  \S \ref{sec-SU}, we prove that every irreducible metabelian representation
$\al \co \pi_1 (N_K) \to \SL(n,\CC)$  has finite image and is conjugate to a unitary representation, and we  develop $SU(n)$ versions of the earlier results. In  \S \ref{sec-proof}, we give the proofs of Theorems \ref{thm1} and \ref{thm2}, and in \S \ref{sec-examples}, we present examples illustrating how to apply these techniques.


\subsection{Cohomology arguments} \label{sec-coh}
Assume now that
$\al \co\pi_1(N_K)\to \SL(n,\CC)$
is a representation and let $\widehat\al \co \pi_1(\partial N_K) \to \SL(n,\CC)$ denote
its restriction to the boundary torus.
Throughout much of what follows, we will assume that  $\al$ is a \emph{regular representation},
meaning that $\al$ is irreducible
 and that the image of $\widehat \al$ contains a matrix with $n$ distinct
eigenvalues. The subset of regular representations is clearly Zariski open in $R_n(N_K)$,
and every irreducible metabelian representation of $\pi_1(N_K)$
is regular (see Remark \ref{remark-regular}).

Choose $g \in \pi_1(\partial N_K)$ so that $\al(g)$ has $n$ distinct eigenvalues.
Then this matrix is diagonalizable, and any other matrix that commutes with it must
lie in the same maximal torus. Since $  \pi_1(\partial N_K) \cong \ZZ \oplus \ZZ$ is abelian,
we see that the stabilizer subgroup of $\widehat\al $ under conjugation is again this
maximal torus. From this, Poincar\'e duality and Euler characteristic considerations,
we conclude that

$$\begin{array}{rcl} \dim H^0(\partial N_K; sl(n,\CC)_{\ad \widehat{\al}})& = &n-1,\\
\dim H^1(\partial N_K; sl(n,\CC)_{\ad \widehat{\al}}) &=& 2(n-1), \text{ and }\\
\dim H^2(\partial N_K; sl(n,\CC)_{\ad \widehat{\al}}) &=& n-1.\end{array} $$

We now consider the long exact sequence in twisted cohomology associated with the
pair $(N_K, \partial N_K).$
The inclusions
$$(\partial N_K, \varnothing) \stackrel{i}{\hookrightarrow} (N_K, \varnothing)  \stackrel{j}{\hookrightarrow} (N_K, \partial N_K)$$
induce the following long exact sequence (coefficients in $sl(n,\CC)$ twisted by $\ad \al$ or $\ad \widehat{\al}$ understood).

\begin{equation} \label{eq5}
 \begin{array}{ccccccccccccccccc}
0&\lto&  H^0(N_K) &\lto& H^0(\partial N_K) &\lto& H^1(N_K, \partial N_K)&\\
&\stackrel{j^1}{\lto} & H^1(N_K) & \stackrel{i^1}{\lto}& H^1(\partial N_K) &
 {\lto}& H^2(N_K, \partial N_K) \\
 & \stackrel{j^2}{\lto}&  H^2(N_K) &\stackrel{i^2}{\lto}& H^2(\partial N_K) &\lto& H^3(N_K,\partial N_K) &\lto&0.
\end{array}
\end{equation}

Exactness of the middle row implies that
$$\dim H^1(N_K) + \dim H^2(N_K, \partial N_K) \geq \dim H^1(\partial N_K) = 2n-2,$$
and by Poincar\'e duality and Lemmas \ref{lem-b} and \ref{lem-c},  we have that $\dim H^1(N_K) = \dim H^2(N_K, \partial N_K).$
This implies $\dim H^1(N_K)  \geq n-1$.

The next proposition shows that the
${\rm image} \left( i^1\co H^1(N_K) {\lto} H^1(\partial N_K) \right)$
has dimension $n-1$, and
 this should be viewed as an instance of the following general principle.
Suppose $N$ is a 3-manifold with boundary $\partial N = \Si$ a compact Riemann surface of genus $g$.
Goldman proved that  the smooth part of the character variety
 $X_n(\Si)$ carries a natural symplectic structure \cite{Gol84}, and a folklore result implies that the image of the restriction $X_n(N) \to X_n(\Si)$ is Lagrangian.
 This idea has been made precise  by A. Sikora, who studied this in the general setting of representations into reductive Lie groups in \cite{Si09},
under the assumption that $\partial X$ is a connected
surface of genus $g \geq 2.$
We  state and prove  analogous results for $\SL(n,\CC)$ representations of
 knot complements $N_K$, which is the main case of interest here.

\begin{prop} \label{prop-Lag}
If $K$ is a knot and $\al \co\pi_1(N_K)\to \SL(n,\CC)$ is a regular representation,
then the image
$${\rm image} \left( i^1 \co H^1(N_K;sl(n,\CC)_{\ad \al}) {\lto} H^1(\partial N_K;sl(n,\CC)_{\ad \widehat\al}) \right)$$
has dimension $n-1$ and is Lagrangian with respect to the symplectic structure $\Omega$
defined below.
It follows that $$\dim H^1(N_K; sl(n,\CC)_{\ad \al}) \geq  n-1.$$
\end{prop}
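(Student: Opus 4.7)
The plan is to define a natural symplectic form $\Omega$ on $H^1(\partial N_K;sl(n,\CC)_{\ad\widehat\al})$ via cup product paired by the Killing form, and then use the long exact sequence \eqref{eq5} together with Poincar\'e--Lefschetz duality to show that the image of $i^1$ coincides with its own $\Omega$-annihilator. Once that is established, the Lagrangian conclusion is pure linear algebra, and in particular $\dim\,\mathrm{image}(i^1)=n-1$, which yields the stated lower bound on $\dim H^1(N_K;sl(n,\CC)_{\ad\al})$.

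First I would construct $\Omega$. Because the diagonal $\pi_1(\partial N_K)$-action on $sl(n,\CC)\otimes sl(n,\CC)$ preserves the Killing form $B$ (Lemma \ref{lem-c}), composing $\smile$ with $B$ and evaluating on the fundamental class of the torus yields
\[
\Omega\co H^1(\partial N_K;sl(n,\CC)_{\ad\widehat\al})\otimes H^1(\partial N_K;sl(n,\CC)_{\ad\widehat\al})\lto H^2(\partial N_K;\CC)\cong\CC.
\]
Graded commutativity of $\smile$ together with symmetry of $B$ makes $\Omega$ skew-symmetric, and non-degeneracy is Poincar\'e duality on the torus with the self-dual coefficient system $sl(n,\CC)_{\ad\widehat\al}$ (Lemmas \ref{lem-b} and \ref{lem-c}). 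Since the boundary $H^1$ has dimension $2(n-1)$ by the computation preceding the proposition, $\Omega$ is symplectic.

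Next I would extract the relevant fragment of \eqref{eq5}. As $\al$ is irreducible, Schur's lemma forces $H^0(N_K;sl(n,\CC)_{\ad\al})=0$, so the piece of interest reduces to
\[
H^1(N_K;sl(n,\CC)_{\ad\al})\xrightarrow{\,i^1\,} H^1(\partial N_K;sl(n,\CC)_{\ad\widehat\al})\xrightarrow{\,\delta\,} H^2(N_K,\partial N_K;sl(n,\CC)_{\ad\al}).
\]
The crucial step is to identify $\delta$ as the transpose of $i^1$ with respect to $\Omega$ on the boundary and the Poincar\'e--Lefschetz pairing
\[
H^1(N_K;sl(n,\CC)_{\ad\al})\otimes H^2(N_K,\partial N_K;sl(n,\CC)_{\ad\al})\lto H^3(N_K,\partial N_K;\CC)\cong\CC
\]
in the interior. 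This identification rests on naturality of the cup product under $(\partial N_K,\varnothing)\hookrightarrow (N_K,\partial N_K)$ combined with the standard compatibility between the connecting map $\delta$ of the pair and the cup product, applied with the Killing form pairing the coefficient systems.

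Finally, exactness of \eqref{eq5} gives $\mathrm{image}(i^1)=\ker(\delta)$, while the transpose relation identifies $\ker(\delta)$ with $\mathrm{image}(i^1)^{\perp_\Omega}$. Hence $\mathrm{image}(i^1)$ is a Lagrangian subspace of a symplectic vector space of dimension $2(n-1)$, so $\dim\,\mathrm{image}(i^1)=n-1$, and therefore $\dim H^1(N_K;sl(n,\CC)_{\ad\al})\geq n-1$. The main obstacle is verifying the adjointness of $i^1$ and $\delta$ with correct sign conventions; once that commutative diagram is in place, the Lagrangian conclusion follows automatically.
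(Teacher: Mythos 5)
Your argument is correct, but the way you establish isotropy differs from the paper's. The paper first pins down $\dim\,\mathrm{image}(i^1)=n-1$ by a rank count in the long exact sequence of the pair (using Poincar\'e duality, Lemmas \ref{lem-b} and \ref{lem-c}, and the boundary computations $\dim H^0(\partial N_K)=n-1=\dim H^2(\partial N_K)$, $\dim H^1(\partial N_K)=2n-2$), and then proves isotropy by a separate, rather slick observation: for $x,y\in H^1(N_K)$ one has $\Omega(i^1x,i^1y)=\tr\,i^2(x\smile y)$, which lies in the image of the \emph{untwisted} restriction $H^2(N_K;\CC)\to H^2(\partial N_K;\CC)$, and that map is zero because $H^2(\partial N_K;\CC)\cong\CC\cong H^3(N_K,\partial N_K;\CC)$ and the connecting map is onto. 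This sidesteps entirely the sign-sensitive adjointness identity $\Omega(i^1(x),b)=\pm\langle x,\delta b\rangle$ that your route hinges on and that you rightly flag as the main obstacle. Your approach is the classical ``half lives, half dies'' argument: once the adjointness is verified, $\mathrm{image}(i^1)=\ker\delta=\mathrm{image}(i^1)^{\perp_\Omega}$ delivers the Lagrangian property and the dimension count in one stroke, which is arguably cleaner and more self-contained; the price is that you must actually carry out the compatibility check between the cup product, the connecting homomorphism, and the Killing-form pairings, whereas the paper only needs naturality of the cup product plus an elementary untwisted computation. Both proofs are sound; if you write yours up, do supply the adjointness verification (or a precise reference for it), since that is the one step you have asserted rather than proved.
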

\begin{proof}
 The fact that
 ${\rm image}(i^1)$
  has dimension $n-1$
follows easily from a diagram chase of the long exact sequence
 (\ref{eq5}), using the fact that $\widehat\al(\pi_1(\partial N_K))$ contains an element
 with $n$ distinct eigenvalues, hence
 $\dim H^0(\partial N_K; sl(n,\CC)_{\ad \widehat\al})=n-1=\dim H^2(\partial N_K; sl(n,\CC)_{\ad \widehat\al})$
and $\dim H^1(\partial N_K; sl(n,\CC)_{\ad \widehat\al}) =2n-2.$

 The symplectic structure $\Omega$ on $H^1(\partial N_K; sl(n,\CC))$
 is defined by composing the cup product with the symmetric bilinear pairing
 obtained by first multiplying the matrices and then taking the trace:
\begin{eqnarray*}
&sl(n,\CC) \times sl(n,\CC) \to gl(n,\CC) \to \CC& \\
&(A,B) \mapsto A \cdot B \mapsto \tr(A \cdot B).&
\end{eqnarray*}

 We have already seen that the
 ${\rm image} (i^1) $
 has dimension $n-1$,
 so we just need to show
 that it is isotropic with respect to $\Omega$.

 Suppose $x,y \in H^1(N_K; sl(n,\CC)_{\ad \al})$
  and consider the long exact sequence (\ref{eq5})
 with untwisted coefficients in $\CC$.
 Let $\smile$ denote the combined cup and matrix product,
 so $x \smile y \in H^2(N_K; gl(n,\CC)_{\ad \al})$.
Using the commutative diagram
$$ \begin{CD}
 H^1(N_K; sl(n,\CC)_{\ad \al}) \times H^1(N_K; sl(n,\CC)_{\ad \al})@>{\smile}>> H^2(N_K; gl(n,\CC)_{\ad \al}) \\
@VV{i^1 \times i^1}V  @VV{i^2}V  \\
 H^1(\partial N_K; sl(n,\CC)_{\ad \widehat\al}) \times H^1(\partial N_K; sl(n,\CC)_{\ad \widehat\al})@>{\smile}>>
 H^2(\partial N_K; gl(n,\CC)_{\ad \widehat\al}),
 \end{CD}$$
 we see that $\Omega(i^1(x), i^1(y))=\tr(i^1(x)\smile i^1(y))=  \tr i^2(x \smile y)$.
 This shows $\Om(i^1(x), i^1(y))$ lies in the image of
 \begin{equation}\label{eq6}
H^2(N_K; \CC) {\lto} H^2(\partial N_K; \CC),
 \end{equation}
 which by exactness of the third row of the long exact sequence (\ref{eq5}), now taken with
 untwisted $\CC$ coefficients,
 equals the kernel of the surjection $H^2(\partial N_K;\CC) \to H^3(N_K,\partial N_K;\CC).$
 However, it is not difficult to compute $H^3(N_K,\partial N_K;\CC) = \CC = H^2(\partial N_K;\CC),$
 and this implies that the map in Equation (\ref{eq6}) is the zero map.
   \end{proof}


\subsection{Dimension arguments} \label{sec-dim}
 In this subsection, we give a lower bound on the dimension of algebraic components of the character variety $X_n(N_K)$ containing
a regular representation.

\begin{prop} \label{prop-dim-alg-comp}
If $\al \co\pi_1(N_K)\to \SL(n,\CC)$ is a regular representation,
then any algebraic component $X_j \subset X_n(N_K)$ containing $\xi_\al$ has $\dim X_j \geq n-1.$
\end{prop}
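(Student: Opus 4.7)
The plan is to extend the classical $\SL(2,\CC)$ dimension bound due to Thurston (\cite[Thm.~3.2.1]{CS83}) to rank $n$, using Proposition \ref{prop-Lag} as the key new input. The strategy combines a Zariski tangent space identification with an obstruction vanishing argument, and then divides by the conjugation orbit to recover the bound on $X_j$.

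First, I would identify the Zariski tangent spaces via the standard deformation theory of representations. At an irreducible $\al$, the conjugation orbit $\SL(n,\CC)\cdot \al \subset R_n(N_K)$ has dimension $n^2-1$, since the stabilizer is the finite center of $\SL(n,\CC)$; its tangent space at $\al$ is $B^1 = B^1(\pi_1(N_K);sl(n,\CC)_{\ad \al})$. The Zariski tangent space of $R_n(N_K)$ at $\al$ is $Z^1(\pi_1(N_K);sl(n,\CC)_{\ad \al})$, so the Zariski tangent space of $X_n(N_K)$ at $\xi_\al$ is $H^1(N_K;sl(n,\CC)_{\ad \al})$, whose dimension is $\geq n-1$ by Proposition \ref{prop-Lag}. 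This gives the correct lower bound at the tangent level, but not yet for $\dim X_j$.

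To promote this to a bound on $\dim_\al R_n(N_K)$, I would show that all $n-1$ distinguished first-order deformations selected by Proposition \ref{prop-Lag} are unobstructed. These are exactly the cocycles whose image under $i^1$ lies in the Lagrangian subspace of $H^1(\partial N_K;sl(n,\CC)_{\ad \widehat{\al}})$. The obstructions to extending a first-order deformation to a formal path of representations live in $H^2(N_K;sl(n,\CC)_{\ad \al})$ via the quadratic cup product paired through the Lie bracket on $sl(n,\CC)$. I would use Poincar\'e--Lefschetz duality together with the Lagrangian (hence isotropic) property of the image of $i^1$ with respect to the symmetric pairing $\Omega$ defined in the proof of Proposition \ref{prop-Lag} to conclude that these obstructions vanish on the relevant $(n-1)$-dimensional subspace. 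This yields $\dim_\al R_n(N_K)\geq \dim B^1+(n-1)=(n^2-1)+(n-1)=n^2+n-2$.

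The main obstacle will be the obstruction vanishing step, since identifying the quadratic obstruction map with the cup product pairing and then using the Lagrangian property requires a careful computation of Massey products, along the lines already developed in \cite{HPS01,HP05,AHJ10}. Once this is done, dividing $R_n(N_K)$ by the $(n^2-1)$-dimensional conjugation orbit gives $\dim X_j = \dim_{\xi_\al} X_n(N_K) \geq (n^2+n-2)-(n^2-1) = n-1$, as required.
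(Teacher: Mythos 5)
Your first paragraph is fine, and your instinct that the tangent-space bound $\dim H^1 \geq n-1$ does not by itself bound $\dim X_j$ is correct. But the obstruction-vanishing step on which your whole argument rests is a genuine gap, and I do not see how to close it with the tools you invoke. The first obstruction to integrating a cocycle $a$ is the class of $[a\smile a]$ in $H^2(N_K; sl(n,\CC)_{\ad\al})$, built from the \emph{Lie bracket}; the isotropy statement in Proposition \ref{prop-Lag} concerns the \emph{trace form} pairing $H^1(\partial N_K)\times H^1(\partial N_K)\to H^2(\partial N_K;\CC)\cong\CC$. These are different pairings with different targets: $\tr[A,B]=0$ identically, so the trace of the bracket obstruction carries no information, and to kill the full $sl(n,\CC)$-valued class you would have to pair it (via Poincar\'e duality) against all of $H^1(N_K,\partial N_K; sl(n,\CC)_{\ad\al})$ — triple products over the fundamental class that the Lagrangian property of ${\rm image}(i^1)$ simply does not see. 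Note that $\dim H^2(N_K; sl(n,\CC)_{\ad\al})=\dim H^1(N_K; sl(n,\CC)_{\ad\al})$ can be strictly larger than $n-1$ for a regular $\al$, which is exactly the case this proposition must cover. The paper's own integrability argument (Lemma \ref{lem-int} in the Appendix) works by showing $H^2(N_K)\to H^2(\partial N_K)$ is \emph{injective} and detecting obstructions on the boundary — but that injectivity is derived from the extra hypothesis $\dim H^1(N_K; sl(n,\CC)_{\ad\al})=n-1$, which is not available here. So your plan would at best reprove Proposition \ref{prop-simple}, not Proposition \ref{prop-dim-alg-comp}. (There is also a secondary issue: even granting integrability of each cocycle in your distinguished subspace and of each coboundary, sums of integrable cocycles need not be integrable, so the jump to $\dim_\al R_n(N_K)\geq n^2+n-2$ needs more care.)

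The paper avoids deformation theory entirely. Since regularity is a Zariski-open condition and smooth points are dense in $X_j$, one chooses a regular representation $\be$ near $\al$ whose character $\xi_\be$ is a smooth point of $X_j$; at such a point $\dim X_j = \dim H^1(N_K; sl(n,\CC)_{\ad\be})$, and Proposition \ref{prop-Lag} applied to $\be$ (not to $\al$) gives $\dim H^1(N_K; sl(n,\CC)_{\ad\be})\geq n-1$. The only inputs are the openness of regularity and the fact that Proposition \ref{prop-Lag} holds uniformly over all regular representations, so no obstruction or Massey-product computation is needed. If you want to salvage a deformation-theoretic proof, you would need a genuinely new mechanism for killing obstructions at a possibly singular, non-rigid point; the Lagrangian property is not it.
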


\begin{proof}
If $\xi_\al$ is a smooth point of $X_j$, then by Proposition \ref{prop-Lag} we have $\dim X_j = \dim H^1(N_K; sl(n,\CC)_{\ad \al}) \geq n-1$. Otherwise,  we can choose $\be \co\pi_1(N_K)\to \SL(n,\CC)$ a regular representation
close to $\al$ )
so that $\xi_{\be} \in X_j$ is smooth.
Applying Proposition \ref{prop-Lag} to $\be,$ it follows that $\dim X_j = \dim H^1(N_K; sl(n,\CC)_{\ad \be}) \geq n-1$.
\end{proof}

Since every irreducible metabelian representation is regular,  we obtain the following
as a direct consequence.
\begin{cor} \label{cor-dim-alg-comp}
If $\al \co\pi_1(N_K)\to \SL(n,\CC)$ is an irreducible  metabelian representation,
then any algebraic component $X_j$ of $X_n(N_K)$ containing $\xi_\al$ has $\dim X_j \geq n-1.$
\end{cor}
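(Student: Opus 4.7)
The plan is to reduce the corollary to Proposition \ref{prop-dim-alg-comp} by verifying that every irreducible metabelian representation $\al\co \pi_1(N_K) \to \SL(n,\CC)$ satisfies the regularity hypothesis, i.e., that the restriction $\widehat\al$ to the boundary torus contains a matrix with $n$ distinct eigenvalues. Once this is in place, the conclusion is immediate.

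First, I would invoke Theorem \ref{thm-BF}(iii) to replace $\al$ (up to conjugation, which does not alter the character $\xi_\al$ nor the algebraic component $X_j$ containing it) by a standard model $\al_{(n,\chi)}$, where $\chi\co H \to \CC^*$ is a character of order $n$. This normalization is the whole point of the classification established in Section 2: an arbitrary irreducible metabelian representation is very rigid, and one may assume it comes from the explicit formula.

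Next, I would appeal to Remark \ref{remark-regular}, which records the key computation: the meridian $\mu$ is sent under $\al_{(n,\chi)}$ to a matrix conjugate to $\operatorname{diag}(z,\om z,\dots,\om^{n-1}z)$ with $\om = e^{2\pi i/n}$ and $z^n=(-1)^{n+1}$. These $n$ eigenvalues are pairwise distinct, so $\widehat\al(\mu)$ is a regular semisimple element of $\SL(n,\CC)$. Since $\mu \in \pi_1(\partial N_K)$, the image of $\widehat\al$ contains a matrix with $n$ distinct eigenvalues, and therefore $\al$ is a regular representation in the sense used in Section \ref{sec-coh}.

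Finally, Proposition \ref{prop-dim-alg-comp} applies directly to $\al$ and yields $\dim X_j \geq n-1$ for every algebraic component $X_j \subset X_n(N_K)$ containing $\xi_\al$. There is no real obstacle here; the only conceptual point is the translation between the abstract notion of a metabelian representation and the explicit matrix form $\al_{(n,\chi)}$, which has already been carried out in Theorem \ref{thm-BF} and Remark \ref{remark-regular}. Thus the corollary is genuinely a one-line consequence of the preceding proposition combined with the classification of irreducible metabelian representations.
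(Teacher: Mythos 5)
Your proof is correct and follows exactly the paper's route: the paper deduces the corollary from Proposition \ref{prop-dim-alg-comp} by noting that every irreducible metabelian representation is regular, citing Remark \ref{remark-regular} for the fact that $\al_{(n,\chi)}(\mu)$ has $n$ distinct eigenvalues. You have merely spelled out the normalization via Theorem \ref{thm-BF}(iii) that the paper leaves implicit.
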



\subsection{Simple points in $\boldsymbol{X_n(N_K)}$} \label{sec-simple}
This subsection presents a smoothness result for  irreducible characters which    is proved using the powerful deformation argument from \cite{HPS01}. A more detailed explanation of this beautiful argument is presented in  Appendix \ref{app-deform}, following \cite{HPS01, HP05, AHJ10}, and the original idea can be traced back to a deep theorem of Artin \cite{Ar68}.

Recall that a point $\xi \in X$ in an affine algebraic variety is called  a
\emph{simple point} if it is contained in a unique algebraic component of $X$ and is a
smooth point of that component. The next result, which essentially follows from Theorem 3.2 in \cite{AHJ10}, implies that
every irreducible metabelian character $\xi_\al$ such that $\dim H_1(N_K; sl(n,\CC)_{\ad \al}) =n-1$
 is a simple point of $X_n(N_K).$

\begin{prop} \label{prop-simple}
If $\al\co \pi_1(N_K) \to \SL(n,\CC)$ is a regular  representation
such that $\dim H^1(N_K; sl(n,\CC)_{\ad \al}) =n-1$,
then $\xi_\al$ is a simple point in the character variety $X_n(N_K).$
\end{prop}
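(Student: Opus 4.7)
The plan is to combine the general upper bound on the Zariski tangent space at an irreducible character with the lower bound on component dimension coming from Corollary \ref{cor-dim-alg-comp}, and then invoke the deformation theorem of Heusener--Porti--Su\'arez Peir\'o in the form of Theorem~3.2 of \cite{AHJ10} to upgrade the numerical coincidence into smoothness of the local ring.

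First I would recall the standard identification of the Zariski tangent space to the representation variety $R_n(\pi_1(N_K))$ at $\al$ with the space of 1-cocycles $Z^1(\pi_1(N_K);sl(n,\CC)_{\ad \al})$, and the corresponding identification of the tangent space to the conjugation orbit with the 1-coboundaries $B^1(\pi_1(N_K);sl(n,\CC)_{\ad \al})$. Since $\al$ is irreducible, the centralizer of its image in $\SL(n,\CC)$ is finite, so the orbit has the expected dimension $\dim B^1$, and the Zariski tangent space to $X_n(N_K)$ at $\xi_\al$ embeds into $H^1(N_K;sl(n,\CC)_{\ad \al})$. By hypothesis this cohomology group has dimension $n-1$, so $\dim T^{\rm Zar}_{\xi_\al}X_n(N_K)\le n-1$. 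Combining this with Corollary \ref{cor-dim-alg-comp}, which gives $\dim X_j\ge n-1$ for any algebraic component $X_j$ of $X_n(N_K)$ through $\xi_\al$, I obtain
$$n-1\;\le\;\dim X_j\;\le\;\dim T^{\rm Zar}_{\xi_\al}X_n(N_K)\;\le\;n-1,$$
so any such $X_j$ has dimension exactly $n-1$ and its Zariski tangent space at $\xi_\al$ has the expected dimension.

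To upgrade this to the simple-point conclusion, I would verify the hypotheses of Theorem~3.2 of \cite{AHJ10}. The key input is the restriction map $i^1\co H^1(N_K;sl(n,\CC)_{\ad \al})\to H^1(\partial N_K;sl(n,\CC)_{\ad \widehat{\al}})$: by Proposition \ref{prop-Lag} its image is a Lagrangian subspace of dimension $n-1$, and under the present assumption $\dim H^1(N_K;sl(n,\CC)_{\ad \al})=n-1$ this map is therefore injective with Lagrangian image of full possible dimension. Regularity of $\al$ and injectivity of $i^1$ are exactly the hypotheses required by the HPS/AHJ machinery to produce, for every cohomology class $u\in H^1(N_K;sl(n,\CC)_{\ad \al})$, an analytic curve $\al_s$ of representations with $\al_0=\al$ whose derivative at $s=0$ represents $u$. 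This realises the entire Zariski tangent space by actual deformations, so the germ of $R_n(\pi_1(N_K))$ at $\al$ is smooth of dimension $\dim B^1+(n-1)$; passing to the quotient by the free conjugation action on the irreducible locus, the germ of $X_n(N_K)$ at $\xi_\al$ is smooth of dimension $n-1$, and in particular $\xi_\al$ lies on a unique component.

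The main obstacle is the integration step inside the HPS/AHJ argument: a priori the obstructions to extending a first-order deformation to higher orders lie in $H^2(N_K;sl(n,\CC)_{\ad \al})$, which by Poincar\'e duality together with Lemma \ref{lem-b} and Lemma \ref{lem-c} need not vanish. The strategy to bypass this is to construct the deformation first on the boundary torus, where smoothness of the local character variety is automatic because $\widehat{\al}$ lands in a maximal torus and $\pi_1(\partial N_K)$ is abelian, and then exploit the Lagrangian nature of $\mathrm{image}(i^1)$ under the cup-product pairing to match a prescribed boundary deformation with a global one. This Lagrangian-matching step is precisely what forces the higher-order obstructions to cancel and is the technical heart of the proof, for which I would refer to the detailed treatment in Appendix \ref{app-deform}.
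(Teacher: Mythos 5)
The central gap is in the deformation step, which you acknowledge is the ``technical heart'' but then both misdescribe and defer to Appendix \ref{app-deform}. The mechanism that kills the obstructions $\om_{k+1}\in H^2(\pi_1(N_K);sl(n,\CC)_{\ad\al})$ is \emph{not} a Lagrangian matching of a prescribed boundary deformation with a global one; indeed, since ${\rm image}(i^1)$ has dimension $n-1$ inside the $2(n-1)$--dimensional $H^1(\partial N_K;sl(n,\CC)_{\ad\widehat\al})$, most boundary deformations do not extend over $N_K$, so that strategy cannot work as stated. The actual argument runs in the opposite direction: one starts from a \emph{global} deformation of order $k$, restricts it to $\partial N_K$, notes that the restricted obstruction vanishes because $\widehat\al$ is a simple point of $R_n(\partial N_K)$ (established by the cocycle count $\dim Z^1(\partial N_K;sl(n,\CC)_{\ad\widehat\al})=n^2+n-2=\dim_{\widehat\al}R_n(\partial N_K)$), and then concludes that the global obstruction vanishes because the restriction map on $H^2$ is \emph{injective}. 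That injectivity is exactly where the hypothesis $\dim H^1(N_K;sl(n,\CC)_{\ad\al})=n-1$ enters: it forces the middle row of the long exact sequence (\ref{eq5}) to be short exact, hence $j^2=0$ and $i^2$ is an isomorphism. The symplectic structure plays no role in the obstruction argument (it is used only in Proposition \ref{prop-Lag} for the dimension count), and the relevant restriction map is on $H^2$, not the map $i^1$ on $H^1$ that you emphasize. Finally, integrability of all cocycles only yields smoothness of $R_n(N_K)$ at $\al$ after invoking Artin's theorem to convert formal deformations into convergent ones and a tangent-cone argument showing all of $Z^1$ is realized by arcs; this is stated in your step list but not supplied.

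Your opening squeeze $n-1\le\dim X_j\le\dim T^{\rm Zar}_{\xi_\al}X_n(N_K)\le n-1$ would, if its inputs were secure, already yield the simple-point conclusion by the standard regularity criterion, making the deformation theorem redundant --- the fact that you then feel compelled to ``upgrade'' it should be a warning sign. The fragile input is the lower bound: you cite Corollary \ref{cor-dim-alg-comp}, which is stated for irreducible metabelian representations (the relevant statement for regular representations is Proposition \ref{prop-dim-alg-comp}), and its proof rests on the identification $\dim X_j=\dim H^1$ at a smooth point of $X_j$, i.e.\ on unobstructedness of $R_n(N_K)$ at a nearby representation --- which is essentially what the deformation argument exists to establish. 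The paper's proof avoids this circle entirely: it proves directly that every cocycle in $Z^1(\pi_1(N_K);sl(n,\CC)_{\ad\al})$ is integrable, deduces that $\al$ is a simple point of $R_n(N_K)$, and only then descends to $X_n(N_K)$ using irreducibility and Luna's slice theorem.
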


Proposition  \ref{prop-simple}  applies to any irreducible metabelian $\SL(n,\CC)$ representation.

We give a full account of this proposition in the Appendix, and here we briefly explain the basic idea.
By irreducibility of $\al$ and Luna's \'etale slice theorem \cite{Lu73},
it follows that $\xi_\al$ is a simple point of $X_n(N_K)$
if and only if $\al$ is a simple point of $R_n(N_K).$ The same is true for $\widehat{\al},$ and the hypotheses ensure that
$\widehat\al$ is a simple point of $R_n(\partial N_K)$.
The main idea is to construct formal deformations for all (Zariski) tangent vectors and to show their integrability by using the fact that all obstructions  project faithfully under projection to $\partial N_K$, where they are known to vanish by the fact that
$\widehat\al$ is a simple point of $R_n(\partial N_K)$.


\subsection{$\boldsymbol{SU(n)}$ results} \label{sec-SU}
This subsection contains the $SU(n)$ analogues of the earlier results on irreducible metabelian  representations.
We will prove   that any irreducible metabelian representation $\al \co\pi_1(N_K)\to \SL(n,\CC)$
satisfying the condition  $\dim H^1(N_K; sl(n,\CC)_{\ad \al}) =  n-1$ has finite image and is therefore conjugate to a unitary representation.

We begin with a few general observations.
 If $\pi$ is a finitely generated group and $\al \co \pi \to SU(n)$ is a representation, then we obtain
an $\SL(n,\CC)$ representation by composing $\al$ with the  inclusion $SU(n) \subset \SL(n,\CC)$.
Irreducibility of $\al$ is preserved under this correspondence, and the map
$R_{SU(n)}(\pi) \to R_n(\pi)$  descends to a well-defined injective map
$X_{SU(n)}(\pi) \lto X_n(\pi)$ between the two character varieties.
 Here and in the following, we set $R_{SU(n)}(\pi) = \Hom(\pi, SU(n))$ and
use $X_{SU(n)}(\pi)$ to denote the character variety of $SU(n)$ representations of $\pi.$

On the level of Lie algebras,
 the complex Lie algebra $sl(n,\CC)$ is obtained by tensoring the real Lie algebra $su(n)$ with $\CC$, i.e. we have
$$sl(n,\CC) \cong su(n) \otimes \CC.$$
Thus, for  $\al \co \pi \to SU(n)$, we see that for any $i \geq 0$ we have
\begin{equation} \label{cohomology-SU}
H^i(\pi; sl(n,\CC)_{\ad\al}) \cong H^i(\pi;su(n)_{\ad \al}) \otimes \CC.
\end{equation}

In the following proposition, we use $L_n$ to denote the $n$--fold  branched
cover of $\Si^3$ branched along $K$.
\begin{prop} \label{prop-finite}
Suppose $\al \co\pi_1(N_K)\to \SL(n,\CC)$ is an irreducible metabelian representation.
If $\dim H^1(N_K; sl(n,\CC)_{\ad \al}) = n-1$, then
$H_1(L_n)$ is finite. In particular, $\al$ has finite image and is conjugate to a
unitary representation.
\end{prop}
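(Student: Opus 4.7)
My plan is to combine Lemma \ref{lem-a} with Lemma \ref{lem-knot-V}(ii), using the fact that the adjoint of a metabelian representation decomposes into pieces that collectively carry an orthogonal structure inherited from the trace form on $gl(n,\CC)$, while the regular-representation piece detects the homology of the $n$-fold cyclic branched cover via Shapiro's lemma.

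To be explicit, by Theorem \ref{thm-BF}(iii) I may assume $\al = \al_{(n,\chi)}$ for some order-$n$ character $\chi \co H_1(L_n) \to \CC^*$. Lemma \ref{lem-a} then gives
$$\ad \al \oplus \th_1 \cong \al_n \oplus \bigoplus_{i=1}^{n-1} \be_{(n,\chi_i)},$$
where each $\chi_i$ is an order-$n$ character, in particular non-trivial. For the regular summand, Shapiro's lemma gives $H^*(N_K; \CC^n_{\al_n}) \cong H^*(\widetilde{N}_n; \CC)$ where $\widetilde{N}_n$ is the $n$-fold cyclic cover of $N_K$; combined with the standard identification $H_1(\widetilde{N}_n; \ZZ) \cong \ZZ \oplus H_1(L_n)$, this yields $\dim H^1(N_K; \CC^n_{\al_n}) = 1 + b_1(L_n)$. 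For the remaining summands, I would apply Lemma \ref{lem-knot-V}(ii) to $\bigoplus_{i=1}^{n-1} \be_{(n,\chi_i)}$, realized concretely as $\bigoplus_{i=1}^{n-1} V_i \subset gl(n,\CC)$; the essential point is that under the trace form the subspace $V_i$ pairs non-degenerately with $V_{n-i}$ and is orthogonal to $V_0$, so the trace form restricts to a non-degenerate symmetric bilinear form on $\bigoplus_{i=1}^{n-1} V_i$ preserved by the adjoint action. Hence Lemma \ref{lem-knot-V}(ii) with $\ell = n-1$ yields
$$\sum_{i=1}^{n-1} \dim H^1(N_K; \CC^n_{\be_{(n,\chi_i)}}) \geq n-1.$$

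Combining with $\dim H^1(N_K; \CC_{\th_1}) = 1$ produces
$$\dim H^1(N_K; sl(n,\CC)_{\ad \al}) \geq (n-1) + b_1(L_n),$$
and under the hypothesis that the left side equals $n-1$ we conclude $b_1(L_n) = 0$. Then $H_1(L_n)$ is finite, $\chi$ has finite image, $\al$ has finite image, and finally $\al$ is conjugate into $SU(n)$ by the standard inner-product averaging argument. The main obstacle I foresee is verifying that the summands $\be_{(n,\chi_i)}$ arising from Lemma \ref{lem-a} match the normalized form $\al_{(n,\chi_i,z_i)}$ with $z_i \in U(1)$ and $z_i^n = 1$ required by Lemma \ref{lem-knot-V}---this should follow from the cancellation of the meridional eigenvalue factor in the adjoint action, since $\ad(zP)(A) = PAP^{-1}$ regardless of the scalar $z$.
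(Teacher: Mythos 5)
Your proposal is correct and follows essentially the same route as the paper: reduce to $\al=\al_{(n,\chi)}$, decompose $\ad\al\oplus\th_1$ via Lemma \ref{lem-a}, identify the $\al_n$-summand's contribution as $1+b_1(L_n)$, and bound the remaining summands below by $n-1$ using Lemma \ref{lem-knot-V}(ii). The only (harmless) variation is how you supply the orthogonality hypothesis for Lemma \ref{lem-knot-V}(ii) --- you pair $V_i$ with $V_{n-i}$ under the trace form directly, whereas the paper deduces orthogonality of $\bigoplus_{i=1}^{n-1}\be_{(n,\chi_i)}$ from the Killing-form orthogonality of $\ad\al$ after splitting off the orthogonal summands $\th_1$ and $\al_n$.
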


\begin{proof}
It follows from Theorem \ref{thm-BF} that we can assume that $\al=\al_{(n,\chi)}$ for some character
 $\chi \co H_1(L_n)\to \CC^*$.
Let $\th_1\co \pi_1(N_K)\to \GL(1,\CC)$ be the trivial representation
and $\al_n\co \pi_1(N_K)\to \Aut(\CC[\ZZ/n])$ be the regular representation corresponding to the canonical projection map $\pi_1(N_K)\to \ZZ\to \ZZ/n$.
 By Lemma \ref{lem-a} we have
  the following isomorphism of representations:
$$ \ad \al \oplus \th_1\cong \al_n \oplus \bigoplus\limits_{i=1}^{n-1} \be_{(n,\chi_i)},$$
 where  $\chi_1, \ldots, \chi_{n-1}$ are characters.
 Clearly $\theta_1$ and $\al_n$ are orthogonal representations.
 Furthermore by Lemma \ref{lem-c} the representation  $\ad \al \co \pi_1(N_K) \to \Aut(sl(n,\CC))$ is an isometry with respect to the Killing form.
If we equip $sl(n,\CC)$ with the standard basis and we thus view $\ad \al$ as a representation to $gl(n^2-1,\CC)$,
then it follows from the definition of the Killing form, that $\ad \al$ is an orthogonal representation.
It now follows that $\be:=\bigoplus_{i=1}^{n-1} \be_{(n,\chi_i)} \co \pi_1(N_K)\to \GL(n(n-1),\CC)$ is also orthogonal.
By Lemma \ref{lem-knot-V} we now have
$$ \begin{array}{rcl} \dim H^1(N_K; sl(n,\CC)_{\ad \al})&=&\dim H_1(N_K;\CC[\ZZ/n])-1+\dim H^1(N_K;\CC_{\be}^{n(n-1)})\\
&=&b_1(L_n)+1-1+\dim H^1(N_K;\CC^{n(n-1)}_\be)\\
&\geq &b_1(L_n) + n-1.\end{array}$$
The condition $\dim H^1(N_K; sl(n,\CC)_{\ad \al}) = n-1$ now shows that $b_1(L_n)=0.$
Thus $H_1(L_n) = H_1(N_K;\ZZ[t^{\pm 1}])/(t^n-1)$ is finite, and this implies
$\al$ has finite image and is conjugate to a unitary representation.
\end{proof}

Proposition \ref{prop-finite} implies that metabelian representations $\al \co\pi_1(N_K)\to \SL(n,\CC)$ are often conjugate to unitary representations, and for that reason we  develop
$SU(n)$ versions of the previous results.
As the proofs are similar to those already given, we leave the details to the industrious reader.

We begin with the $SU(n)$ version of Proposition \ref{prop-Lag}.
Just as in the $\SL(n,\CC)$ case, we say a representation $\al \co \pi_1(N_K) \to SU(n)$ is   \emph{regular}
if it is irreducible and if the image of the restriction $\widehat{\al} \co\pi_1(\partial N_K)\to SU(n)$
contains a matrix with $n$ distinct eigenvalues.

Note that the definition of the symplectic form $\Omega$ on $H^1(\partial N_K; sl (n,\CC)_{\ad \widehat\al})$ in the proof of Proposition  \ref{prop-Lag}   carries over easily to the $SU(n)$ setting, and we
use $\Omega_{SU(n)}$ to denote the resulting
symplectic form on $H^1(\partial N_K; su(n)_{\ad \widehat\al})$.

\begin{prop} \label{prop-Lag-SU}
If $K$ is a knot and $\al \co\pi_1(N_K)\to SU(n)$ is a regular representation,
then the image
$${\rm image} \left( i^1 \co H^1(N_K;su(n)_{\ad \al}) {\lto} H^1(\partial N_K;su(n)_{\ad \widehat\al}) \right)$$
has real dimension $n-1$ and is Lagrangian with respect to the natural symplectic structure $\Omega_{SU(n)}$.
It follows that $$\dim_\RR H^1(N_K; su(n)_{\ad \al}) \geq  n-1.$$
\end{prop}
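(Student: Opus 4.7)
The plan is to reproduce the argument used for Proposition \ref{prop-Lag} with $sl(n,\CC)$ replaced throughout by the real Lie algebra $su(n)$. The key simplification is the tensor product identity $sl(n,\CC)\cong su(n)\otimes_\RR\CC$ noted in (\ref{cohomology-SU}), which gives
$$\dim_\RR H^i(X;su(n)_{\ad \al}) = \dim_\CC H^i(X;sl(n,\CC)_{\ad \al})$$
for any $\al\co\pi_1(X)\to SU(n)$ and any $i$. In particular, the boundary computations of \S \ref{sec-coh} transfer directly to give $\dim_\RR H^0(\partial N_K;su(n)_{\ad\widehat\al})=n-1$, $\dim_\RR H^1(\partial N_K;su(n)_{\ad\widehat\al})=2(n-1)$, and $\dim_\RR H^2(\partial N_K;su(n)_{\ad\widehat\al})=n-1$.

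Next, I would define $\Omega_{SU(n)}$ by combining cup product with the real symmetric non--degenerate $SU(n)$--invariant bilinear form $(A,B)\mapsto -\tr(AB)$ on $su(n)$, which is a positive multiple of the Killing form and takes real values on skew--Hermitian matrices (since $\overline{\tr(AB)}=\tr(BA)=\tr(AB)$). This yields a pairing into $H^2(\partial N_K;\RR)\cong\RR$ that is non--degenerate by Poincar\'e duality together with the real analogues of Lemmas \ref{lem-b} and \ref{lem-c}. I would then rerun the long exact sequence of the pair $(N_K,\partial N_K)$ with $su(n)_{\ad\al}$ coefficients and carry out the same diagram chase as in Proposition \ref{prop-Lag}: Poincar\'e duality identifies $\dim_\RR H^1(N_K;su(n)_{\ad\al})$ with $\dim_\RR H^2(N_K,\partial N_K;su(n)_{\ad\al})$, and combining this with exactness and the boundary dimensions above pins the real dimension of the image of $i^1$ at exactly $n-1$.

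Finally, for isotropicity with respect to $\Omega_{SU(n)}$, the same commutative square used in Proposition \ref{prop-Lag}, now with untwisted $\RR$ coefficients on the right, shows that $\Omega_{SU(n)}(i^1(x),i^1(y))$ lies in the image of $i^2\co H^2(N_K;\RR)\to H^2(\partial N_K;\RR)$. This map vanishes because exactness of the untwisted long exact sequence of the pair together with $H^3(N_K,\partial N_K;\RR)\cong\RR\cong H^2(\partial N_K;\RR)$ forces the subsequent connecting map to be an isomorphism. The only step requiring any genuine thought is confirming that the real trace form on $su(n)$ is a bona fide coefficient pairing inducing a symplectic structure on $H^1(\partial N_K;su(n)_{\ad\widehat\al})$, but this is routine since the form is manifestly real, symmetric, non--degenerate, and $SU(n)$--equivariant. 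Every remaining step is a direct transcription of the $\SL(n,\CC)$ case, which is exactly why the authors leave the details to the reader.
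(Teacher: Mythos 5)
Your proposal is correct and follows exactly the route the paper intends: the authors explicitly leave this proof to the reader as a direct transcription of Proposition \ref{prop-Lag}, which is what you carry out, and your added checks (reality and non-degeneracy of the trace form on $su(n)$, and the dimension-preserving complexification $sl(n,\CC)\cong su(n)\otimes_\RR\CC$) are precisely the points that need verifying. No gaps.
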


Next, we present the $SU(n)$ version of Proposition \ref{prop-dim-alg-comp}.
 Recall that $X_{SU(n)}(N_K)$ is a real algebraic variety.

\begin{prop} \label{prop-dim-alg-comp-SU}
If $\al \co\pi_1(N_K)\to SU(n)$ is a regular  representation,
then any algebraic component $X_j \subset X_{SU(n)}(N_K)$ containing $\xi_\al$ satisfies $\dim_\RR X_j \geq n-1.$
\end{prop}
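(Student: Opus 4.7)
The plan is to adapt the proof of Proposition \ref{prop-dim-alg-comp} to the real algebraic setting, using Proposition \ref{prop-Lag-SU} in place of Proposition \ref{prop-Lag}. Recall that $X_{SU(n)}(N_K)$ is a real algebraic variety and that at a character $\xi_\be$ of an irreducible representation $\be$, the Zariski tangent space is naturally identified with $H^1(N_K;su(n)_{\ad \be})$. The argument splits, as in the complex case, into a smooth case and a singular case reduced to it by a perturbation.

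First, suppose $\xi_\al$ is a smooth point of $X_j$. Then the real dimension of $X_j$ equals the real dimension of its Zariski tangent space at $\xi_\al$, and Proposition \ref{prop-Lag-SU} yields
$$\dim_\RR X_j \;=\; \dim_\RR H^1(N_K;su(n)_{\ad \al}) \;\geq\; n-1.$$

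Otherwise, $\xi_\al$ is not a smooth point of $X_j$. Since $X_j$ is (by definition) an irreducible real algebraic component, its smooth locus is open and dense in $X_j$. The regular locus in $R_{SU(n)}(N_K)$ is open and contains $\al$, and, on the locus of irreducible representations, the quotient map $t\co R_{SU(n)}(N_K) \to X_{SU(n)}(N_K)$ is open, since the stabilizer of a regular representation under conjugation is a maximal torus and so the orbit dimension is locally constant. Hence one can find a regular representation $\be\co \pi_1(N_K) \to SU(n)$ arbitrarily close to $\al$ whose character $\xi_\be \in X_j$ is a smooth point of $X_j$. Applying the first case to $\be$ gives $\dim_\RR X_j = \dim_\RR H^1(N_K;su(n)_{\ad \be}) \geq n-1$.

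The main obstacle is ensuring the existence of the perturbation $\be$ in the singular case. The delicate point is that regularity (irreducibility together with the existence of an element in $\widehat\al(\pi_1(\partial N_K))$ with $n$ distinct eigenvalues) is an open condition in $R_{SU(n)}(N_K)$, and hence survives small perturbations of $\al$; combined with the openness of $t$ on irreducibles and the density of smooth points in $X_j$, this guarantees that such a $\be$ exists. Once this is in hand, the proposition follows immediately from the Lagrangian bound in Proposition \ref{prop-Lag-SU}.
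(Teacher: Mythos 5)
The paper gives no proof of Proposition \ref{prop-dim-alg-comp-SU}: it explicitly leaves the $SU(n)$ analogues ``to the industrious reader,'' indicating only that the arguments mirror the complex case. Your proposal follows exactly that intended route, transplanting the proof of Proposition \ref{prop-dim-alg-comp} with Proposition \ref{prop-Lag-SU} in place of Proposition \ref{prop-Lag}, so in structure you are doing what the authors have in mind.

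However, the one step you correctly single out as delicate is resolved by a claim that is false in the real algebraic category. For an irreducible \emph{real} algebraic variety, the smooth locus is Zariski-open and nonempty, but it need not be dense in the Euclidean topology on the set of real points: an irreducible real curve can have an isolated real point, which is then singular and has no smooth real points nearby (e.g.\ the origin on $y^2 = x^2(x-1)$). So the assertion ``the smooth locus is open and dense in $X_j$,'' which is what lets you find a regular $\be$ close to $\al$ with $\xi_\be$ smooth, is precisely the part of the complex argument that does not transfer verbatim to $X_{SU(n)}(N_K)$; your openness argument for the quotient map $t$ does not repair this, since the obstruction lives in $X_j$ itself, not in the passage from $R_{SU(n)}$ to $X_{SU(n)}$. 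To close the gap you would need an additional input — for instance, exploiting the local model of $X_{SU(n)}(N_K)$ near an irreducible character as a quotient of a real analytic germ whose smooth top-dimensional stratum is dense, or deducing the real bound from the complex one in Proposition \ref{prop-dim-alg-comp} together with the relation $H^1(N_K;sl(n,\CC)_{\ad\al})\cong H^1(N_K;su(n)_{\ad\al})\otimes\CC$ of Equation (\ref{cohomology-SU}). A secondary caveat, inherited from the paper's own proof of Proposition \ref{prop-dim-alg-comp} and so not really your responsibility, is that smoothness of $X_j$ at $\xi_\be$ together with the Weil embedding of the tangent space into $H^1$ gives a priori only $\dim_\RR X_j \le \dim_\RR H^1(N_K;su(n)_{\ad\be})$; the equality used to convert the Lagrangian lower bound on $H^1$ into a lower bound on $\dim_\RR X_j$ deserves its own justification.
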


Since all irreducible metabelian $SU(n)$ representations are regular,  Proposition \ref{prop-dim-alg-comp-SU} applies to give the following as a direct consequence.
\begin{cor} \label{cor-dim-alg-comp-SU}
If $\al \co\pi_1(N_K)\to SU(n)$ is an irreducible  metabelian representation,
then any algebraic component $X_j$ of  $X_{SU(n)}(N_K)$ containing $\xi_\al$ has $\dim_\RR X_j \geq n-1.$
\end{cor}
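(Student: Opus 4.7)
The plan is to deduce this corollary directly from Proposition \ref{prop-dim-alg-comp-SU} by verifying that every irreducible metabelian $SU(n)$ representation is regular in the $SU(n)$ sense. First I would compose $\al\co\pi_1(N_K)\to SU(n)$ with the inclusion $SU(n)\hookrightarrow \SL(n,\CC)$ to obtain an irreducible metabelian $\SL(n,\CC)$ representation, and then apply Theorem \ref{thm-BF}(iii) to conclude that this composition is $\SL(n,\CC)$--conjugate to some $\al_{(n,\chi)}$ with $\chi\co H\to\CC^*$ a character of order $n$.

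Next, I would invoke Remark \ref{remark-regular}, which asserts that $\al_{(n,\chi)}(\mu)$ is conjugate to a diagonal matrix with the $n$ distinct eigenvalues $z,\om z,\ldots,\om^{n-1}z$, where $\om=e^{2\pi i/n}$ and $z^n=(-1)^{n+1}$. Since the property of possessing $n$ distinct eigenvalues is invariant under conjugation in $\SL(n,\CC)$, it follows that $\al(\mu)$ itself has $n$ distinct eigenvalues. Hence the restriction $\widehat\al\co\pi_1(\partial N_K)\to SU(n)$ contains an element with $n$ distinct eigenvalues, which together with irreducibility is exactly the defining condition of a regular $SU(n)$ representation as introduced in Subsection \ref{sec-SU}.

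With regularity established, Proposition \ref{prop-dim-alg-comp-SU} applies directly and yields $\dim_\RR X_j\geq n-1$ for every algebraic component $X_j\subset X_{SU(n)}(N_K)$ containing $\xi_\al$. There is no genuine obstacle to overcome: the argument is the exact $SU(n)$ analogue of the deduction of Corollary \ref{cor-dim-alg-comp} from Proposition \ref{prop-dim-alg-comp}. The only subtlety worth flagging is that the spectral data used to certify regularity is derived from the $\SL(n,\CC)$ normal form $\al_{(n,\chi)}$ rather than from $\al$ itself; this transfers back to $\al$ precisely because having $n$ distinct eigenvalues is a conjugation-invariant (hence basis-free) property of $\al(\mu)$.
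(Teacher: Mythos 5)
Your argument is correct and is exactly the route the paper takes: the corollary is stated there as an immediate consequence of Proposition \ref{prop-dim-alg-comp-SU} once one knows every irreducible metabelian representation is regular, which the paper justifies (in the $\SL(n,\CC)$ discussion preceding Proposition \ref{prop-Lag}) by the same appeal to Theorem \ref{thm-BF} and Remark \ref{remark-regular} that you spell out. Your extra care in noting that having $n$ distinct eigenvalues is conjugation-invariant is a reasonable elaboration of a step the paper leaves implicit, but it is not a different approach.
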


The final result  is an $SU(n)$ version of Proposition \ref{prop-simple}.

\begin{prop} \label{prop18}
If $\al\co \pi_1(N_K) \to SU(n)$ is a regular representation
such that $\dim_\RR H_1(N_K; su(n)_{\ad \al}) =n-1$,
then $\xi_\al$ is a simple point in the character variety $X_{SU(n)}(N_K).$
\end{prop}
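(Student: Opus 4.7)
The plan is to reduce Proposition \ref{prop18} to its complex counterpart Proposition \ref{prop-simple} via the cohomology identification (\ref{cohomology-SU}) together with the dimension lower bound of Proposition \ref{prop-dim-alg-comp-SU}. Alternatively, as the paper suggests, one may simply rerun the deformation argument of Appendix \ref{app-deform} in the real-analytic category, with $\SL(n,\CC)$ and $sl(n,\CC)$ replaced throughout by $SU(n)$ and $su(n)$.

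First I would translate the hypothesis to the complex setting. By Lemmas \ref{lem-b} and \ref{lem-c} (applied to $\ad \al$ on $su(n)$, which is orthogonal with respect to the restriction of the Killing form) together with (\ref{cohomology-SU}),
\[ \dim_\CC H^1(N_K;sl(n,\CC)_{\ad \al}) \;=\; \dim_\RR H^1(N_K;su(n)_{\ad \al}) \;=\; \dim_\RR H_1(N_K;su(n)_{\ad \al}) \;=\; n-1. \]
Composing $\al$ with the inclusion $SU(n)\subset \SL(n,\CC)$ gives a regular $\SL(n,\CC)$ representation, so Proposition \ref{prop-simple} yields that $\xi_\al$ is a simple point of $X_n(N_K)$. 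Let $X_j^{\CC}$ denote the unique algebraic component of $X_n(N_K)$ through $\xi_\al$; it has complex dimension $n-1$ and is smooth at $\xi_\al$.

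Next I would transfer smoothness to $X_{SU(n)}(N_K)$. Irreducibility of $\al$ together with the real-analytic Luna slice theorem applied to the $SU(n)$-action on $R_{SU(n)}(N_K)$ identifies a neighborhood of $\xi_\al$ in $X_{SU(n)}(N_K)$ with the quotient of a smooth slice, whose Zariski tangent space at $\xi_\al$ is canonically isomorphic to $H^1(N_K;su(n)_{\ad \al})$ and thus has real dimension $n-1$. Let $X_j^{SU}$ be any real-algebraic component of $X_{SU(n)}(N_K)$ containing $\xi_\al$. By Proposition \ref{prop-dim-alg-comp-SU}, $\dim_\RR X_j^{SU}\geq n-1$; on the other hand, the injection $X_{SU(n)}(N_K) \hookrightarrow X_n(N_K)$ sends $X_j^{SU}$ into the smooth $(n-1)$-complex-dimensional piece $X_j^{\CC}$ as a real-algebraic subset, so $\dim_\RR X_j^{SU} \leq n-1$. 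The coincidence of this real dimension with the tangent space dimension forces $\xi_\al$ to be smooth on $X_j^{SU}$, and uniqueness of $X_j^{SU}$ follows from uniqueness of $X_j^{\CC}$.

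The main obstacle is the slice-theoretic identification of $X_{SU(n)}(N_K)$ near the irreducible $\xi_\al$ with a real form of $X_n(N_K)$ of matching dimension, which ensures that real algebraic components upstairs sit inside a complex component downstairs without dropping dimension. If one prefers a self-contained route, I would instead follow the appendix's proof of Proposition \ref{prop-simple} verbatim in $SU(n)$: construct formal power series deformations over $\RR$ of every class in $H^1(N_K;su(n)_{\ad \al})$, noting that each obstruction in $H^2(N_K;su(n)_{\ad \al})$ injects via restriction into $H^2(\partial N_K;su(n)_{\ad \widehat\al})$, where it vanishes because $\widehat\al$ is a smooth point of $R_{SU(n)}(\partial N_K)$. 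Artin's theorem \cite{Ar68} then upgrades these formal deformations to a genuine real-analytic family of the required dimension.
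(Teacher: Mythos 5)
The paper gives no separate proof of Proposition \ref{prop18}: it declares that the $SU(n)$ statements follow by repeating the arguments already given, i.e.\ by running the deformation scheme of Appendix \ref{app-deform} (formal deformations, vanishing of the obstructions after restriction to $\partial N_K$ where $\widehat\al$ is a smooth point, then Artin's theorem) with $su(n)$ in place of $sl(n,\CC)$. Your fallback route is therefore exactly the paper's intended proof, and it goes through: the cocycle/local-dimension count $\dim_\RR Z^1(\partial N_K; su(n)_{\ad\widehat\al})=n^2+n-2=\dim_{\widehat\al}R_{SU(n)}(\partial N_K)$, the injectivity of restriction on $H^2$, and Artin approximation are all insensitive to working over $\RR$. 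Your primary route --- complexify via (\ref{cohomology-SU}), invoke Proposition \ref{prop-simple}, and descend to the real points --- is genuinely different; it buys a shorter argument given that Proposition \ref{prop-simple} is already established, at the cost of having to control how the real character variety sits inside the complex one.

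That is where your primary route has a loose step: the inference that $X_j^{SU}$ maps into the smooth complex $(n-1)$-dimensional component $X_j^{\CC}$ ``as a real-algebraic subset, so $\dim_\RR X_j^{SU}\leq n-1$'' is not valid as written, since a real-algebraic subset of a smooth complex variety of complex dimension $n-1$ can have real dimension as large as $2(n-1)$. To get the bound you must either observe that $X_{SU(n)}(N_K)$ lies in the fixed-point set of an antiholomorphic involution of $X_n(N_K)$, hence is totally real in $X_j^{\CC}$, or --- more simply, and using only what you have already set up --- bound the local dimension of $X_{SU(n)}(N_K)$ at $\xi_\al$ by the real dimension of its Zariski tangent space, which your slice-theorem identification computes to be $\dim_\RR H^1(N_K; su(n)_{\ad\al})=n-1$. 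Combined with the lower bound from Proposition \ref{prop-dim-alg-comp-SU}, this forces smoothness at $\xi_\al$ and uniqueness of the component directly, with no reference to $X_j^{\CC}$ at all. With that repair either of your routes is a complete proof.
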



\subsection{Proofs of Theorems \ref{thm1} and \ref{thm2}} \label{sec-proof}
In this subsection, we   prove the two main results from the Introduction.
\begin{proof}[Proof~of~Theorem~\ref{thm1}]
Suppose $\al$ is an irreducible metabelian representation with
$\dim H^1(N_K; sl(n,\CC)_{\ad \al}) =n-1.$
Applying
Proposition \ref{prop-finite}, we see that $\al$ has finite image and
hence is conjugate to a unitary representation.
Since $\al(\mu)$ has $n$ distinct eigenvalues, Proposition \ref{prop-simple} applies and
gives rise to a smooth complex $(n-1)$--dimensional family of $\SL(n,\CC)$ characters
near $\xi_\al \in X_n(N_K)$.

Conjugating, if necessary, we can arrange that  $\al$ is unitary.
In that case, Equation (\ref{cohomology-SU}) implies that
$$H^1(N_K; sl(n,\CC)_{\ad \al}) = H^1(N_K; su(n)_{\ad \al}) \otimes \CC,$$
and it follows that $\dim_\RR H^1(N_K; su(n)_{\ad \al}) =n-1.$
Thus Proposition \ref{prop18} applies and
gives rise to a  smooth real $(n-1)$--dimensional family of irreducible characters
near $\xi_\al \in X_{SU(n)}(N_K)$.

Note that Proposition \ref{prop-finite} shows that $b_1(L_n)=0$, and thus
every irreducible metabelian representation $\be\co \pi_1(N_K) \to \SL(n,\CC)$
factors through a finite group. In particular, this shows that up to conjugation there are only finitely many
irreducible metabelian $\SL(n,\CC)$ representations, and their characters give rise to
a finite collection of points in the character variety $X_n^*(N_K).$ It follows that
we can take either of the two deformation families of conjugacy classes of irreducible
representations so that $\xi_\al$ is the unique metabelian representation within the family.
\end{proof}

\begin{proof}[Proof~of~Theorem~\ref{thm2}]
Let $\al \co \pi_1(N_K)\to \SL(n,\CC)$ an irreducible metabelian representation, and
$\varphi \co \pi_1(N_K)\to \ZZ/n\ltimes H$  a homomorphism such that $\al$ factors through $\varphi$ and with $H$ finite. Set $k=|H|$.

We first consider the cover $p\co\widetilde{N}_\varphi \to N_K$  corresponding to $\varphi$.
Note that there exist precisely $k=|H|$ characters $H\to U(1)$.
We denote this set by $\{\si_1,\dots,\si_k\}$, where we assume that $\si_1$ is the trivial character.
It is not difficult to see that the representation $\si_1\oplus \dots \oplus \si_k\co H\to \Aut(\CC^k)$ is isomorphic to the regular representation
$H\to \Aut(\CC[H])$.
We denote the representation $\pi_1(N_K)\to \Aut(\CC[\ZZ/n\ltimes H])$ by $\varphi$ as well.
Then it is straightforward to verify that
$$ \varphi\cong \bigoplus\limits_{i=1}^k \be_{(n,\si_i)}.$$
In particular, setting $V=\CC^{kn}$ and $U=\CC^{n},$ we have
$$ b_1(\widetilde{N}_\varphi)=b_1(N;V_\varphi)=\sum\limits_{i=1}^k b_1(N;U_{\be_{(n,\si_i)}} ).$$
Note that each $\be_{(n,\si_i)}$ is a unitary representation.
It now follows immediately from Lemma \ref{lem-knot-V} that $b_1(\widetilde{N}_\varphi)\geq k$.
Furthermore, if
$b_1(\widetilde{N}_\varphi)=k$ then it  follows  that $b_1(N;U_{\be_{(n,\si_i)}})=1$ for each $i=1, \ldots , k$.
Statement (i) now follows immediately from
Lemma \ref{lem-a}.

We now turn to the proof of (ii).
We write $T=\partial N_K$.
Note that the image of the restriction $\widehat \varphi \co \pi_1(T)\to \ZZ/n\ltimes H$ has order $n$. In particular
the preimage of $T$ under the covering
$p\co \widetilde{N}_\varphi\to N_K$
has $k=|H|$ components, and we denote them by $T_1,\dots,T_k$.
Note that in each $T_i$ there exist  simple closed curves $\mu_i$ and $\la_i$ such that $p|_{\mu_i}$ restricts to
an $n$--fold cover of the meridian $\mu\subset T$ and such that $p|_{\la_i}$ restricts to a homeomorphism with the longitude $\la$ of $T$.
Note that $\mu_i,\la_i$ form a basis for $H_1(T_i)$.

We now denote
by $\widehat{\Si}_\varphi$ the result of gluing $k$ solid tori $S_1,\dots,S_k$ to the boundary of $\widetilde{N}_\varphi$ such that
each $\mu_i$ bounds a disk in $S_i$. The projection map $p\co \widetilde{N}_\varphi\to N_K$
then extends in a canonical way to a covering map $\widehat{\Si}_\varphi\to \Si$, branched over $K$, and that proves (ii).

We finally turn to the proof of (iii).
Consider the following Mayer--Vietoris sequence:
$$ \bigoplus\limits_{i=1}^k H_1(T_i)\to \bigoplus\limits_{i=1}^k H_1(S_i)\, \oplus \, H_1(\widetilde{N}_\varphi)\to H_1(\widehat{\Si}_\varphi)\to 0.$$
It follows immediately that
$$ b_1(\widehat{\Si}_\varphi)\geq k+b_1(\widetilde{N}_\varphi)-2k=b_1(\widetilde{N}_\varphi)-k.$$
In particular if $b_1(\widehat{\Si}_\varphi)=0$, then
$b_1(\widetilde{N}_\varphi)\leq k$. Applying (i) shows we have equality here and that (iii) holds.
\end{proof}

\subsection{Examples} \label{sec-examples}
In this subsection, we show how to construct deformations of   metabelian representations
$\al \co \pi_1(N_K) \to \SL(n,\CC)$ in specific situations.

We begin with some general comments about the rank two case.
As mentioned in the introduction, by results of Culler and Shalen \cite{CS83}, if $K$ is a small knot,
then any irreducible metabelian representation
$\al \co \pi_1(N_K) \to \SL(2,\CC)$ lies on an algebraic component of $X_2(N_K)$ of dimension one.
Since all torus knots and all two-bridge knots are small, this tells us that Theorem \ref{thm1} applies to many knots
in rank two. Interestingly, not all such knots admit irreducible metabelian $\SL(2,\CC)$ representations.
For example, in the notation of Rolfsen's table \cite{Ro76}, this occurs for the knots $10_{124}$ and $10_{153}$.
Note that $10_{124}$ is the $(3,5)$--torus knot and is a fibered knot of genus 3, whereas $10_{153}$
is not a torus knot but it is fibered of genus 4.
A simple calculation using \cite[Theorem 3.7]{BF08} shows that both knots admit irreducible metabelian  representations in $\SL(3,\CC)$ and $\SL(5,\CC)$, indeed up to conjugation $10_{124}$ admits 8 such  representations in rank 3 and 16 in rank 5, whereas $10_{153}$ admits 16
such representations in rank 3 and 24 in rank 5. In both cases, we see that $H_1(L_3)$ and $H_1(L_5)$ are finite, and so the irreducible metabelian characters are isolated points in the character variety of the metabelian quotient $\pi_1(N_K)/ \pi_1(N_K)^{(2)}.$
Proposition \ref{prop-Lag} applies to show they can be deformed to nearby non-metabelian irreducible representations.

We now investigate situations to which Theorem \ref{thm1} applies, and for that purpose we will consider a fibered knot $K$ of genus one
in an integer homology 3-sphere $\Si$. Note that by the proof of Proposition 5.14 of \cite{BZ85}, it follows that the complement
$\Si \sm \tau(K)$ is homeomorphic to that of
 the trefoil or the figure eight knot. The trefoil knot has irreducible metabelian representations only in rank $2, 3,$ and $6$.  The figure eight knot, on the other hand, has irreducible metabelian $\SL(n,\CC)$ representations for all but finitely many ranks, which follows directly from Theorem \ref{thm:infinitelymanyreps}.
Indeed, the number of conjugacy classes of irreducible metabelian representations for both knots
$\al \co \pi_1(N_K) \to SU(n)$ can be determined in terms of the orders $|H_1(L_k)|$ taken over all divisors $k$ of  $n$, and direct computation shows that
the trefoil has a unique irreducible metabelian representation in ranks 2 and 3, whereas
  the figure eight has increasingly many as the rank $n \to \infty$. Applying
  Theorem 3.7 of \cite{BF08}, we compute  the number of distinct
conjugacy classes of irreducible metabelian $SU(n)$ representations for the figure eight knot, and the results for $1\leq n \leq 21$
are listed in Table \ref{figure-eight}.

The next result shows that any algebraic component of $X_n(N_K)$ containing such a representation has dimension $n-1$.
Thus Theorem \ref{thm1} applies and gives a nice local description of the character variety near these metabelian characters.

 \begin{table}[t]
   \begin{tabular}{|c|r| |c|r| |c|r|}
\hline  n & $\#$ & n  & $\#$  & n  & $\#$ \\ \hline
1& 1 & 8 & 270  &  15& 124,024 \\
2& 2  & 9& 640  & 16& 304,290  \\
3& 5  & 10& 1500 &  17& 750,120\\
4& 10  & 11 & 3600   & 18& 1,854,400 \\
5& 24  & 12& 8610 & 19& 4,600,200 \\
6& 50  & 13& 20880  &  20 &  11,440,548  \\
\;\;7\;\;& \;\;120  & \;\;14\;\;& \;\;50700 &  \;\;21\;\; & \;\; 28,527,320 \\ \hline
  \end{tabular} \medskip
 \caption{The number of conjugacy classes of irreducible metabelian $SU(n)$ representations for the figure eight knot for $1 \leq n \leq 21$}
 \label{figure-eight}
 \end{table}

\begin{prop} \label{prop-fiber-one}
Suppose $K$ is a fibered knot of genus one in a homology 3-sphere $\Si$ whose $n$--fold branched cover has
$H_1(L_n)$ finite and $\al \co \pi_1(N_K) \to \SL(n,\CC)$ is an irreducible
metabelian representation. Then any algebraic component $X_j$ of $X_n(N_K)$ containing $\xi_\al$
has $\dim X_j = n-1.$
\end{prop}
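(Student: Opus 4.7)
The plan is to verify the hypothesis of Theorem \ref{thm1}, namely $\dim H^1(N_K; sl(n,\CC)_{\ad \al}) = n-1$, which combined with the lower bound from Proposition \ref{prop-Lag} needs only the matching upper bound. Once established, Theorem \ref{thm1} (via Proposition \ref{prop-simple}) makes $\xi_\al$ a simple point of $X_n(N_K)$, so the unique algebraic component $X_j$ through $\xi_\al$ has $\dim X_j = n-1$. The central tool will be the Wang sequence for the fibration $F \to N_K \to S^1$ with $F$ a once-punctured torus and monodromy $\phi\co F \to F$; such a fibration exists because, by Proposition~5.14 of \cite{BZ85}, $N_K$ is homeomorphic to the complement of either the trefoil or the figure eight knot.

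Using Lemma \ref{lem-a}, I will decompose $\ad \al \oplus \th_1 \cong \al_n \oplus \bigoplus_{k=1}^{n-1} \be_{(n,\chi_k)}$ and handle each summand. For the $\al_n$ part, Shapiro's lemma identifies $H^1(N_K;\al_n)$ with $H^1(\widetilde{N}_K^{(n)};\CC)$, and the Wang sequence for the fibered $n$-fold cyclic cover $\widetilde{N}_K^{(n)}$ yields $b_1(\widetilde{N}_K^{(n)}) = 1 + \dim\ker((\phi^n)^* - 1|_{H^1(F;\CC)})$. The eigenvalues of $\phi_*$ on $H^1(F;\CC)$ are the roots of $\De_K(t)$, which by hypothesis has no $n$-th root of unity as a zero; so $(\phi^n)^* - 1$ is invertible, $\dim H^1(N_K; \al_n) = 1 = \dim H^1(N_K;\th_1)$, and these two contributions cancel in the decomposition.

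The main work is showing $\dim H^1(N_K; \be_{(n,\chi_k)}) = 1$ for each $k = 1, \dots, n-1$. Setting $\be = \be_{(n,\chi_k)}$, the restriction $\be|_F$ splits as $\bigoplus_{j=0}^{n-1} \CC_{t^j\chi_k}$, and since $\chi$ has order $n$ each $t^j\chi_k$ is a nontrivial character of $H_1(F) = H$. So $H^0(F;\be|_F) = 0$ and $\dim H^1(F;\be|_F) = n$, and the Wang sequence collapses to $\dim H^1(N_K;\be) = \dim\ker(\phi^* - 1|_{H^1(F;\be|_F)})$. Because $\be(\mu) = P$ cyclically permutes the weight spaces $V_j$, the induced $\phi^*$ cyclically shifts a basis of $H^1(F;\be|_F) \cong \CC^n$ up to nonzero scalars; its characteristic polynomial has the form $\la^n - c$, forcing $\dim\ker(\phi^*-1) \leq 1$. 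Since $H_1(L_n)$ is finite, Proposition \ref{prop-finite} shows $\be$ is unitary, and Lemma \ref{lem-knot-V}(ii) then supplies the reverse inequality $\dim H^1(N_K;\be) \geq 1$.

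The step I expect to require the most care is identifying $\phi^*$ concretely as a weighted cyclic shift on $H^1(F;\be|_F)$. Since $\phi$ acts on $H$ as multiplication by $t$, pulling the character $t^j\chi_k$ back along $\phi$ produces $t^{j+1}\chi_k$; the isomorphism $V_j \xrightarrow{\sim} V_{j+1}$ of $\pi_1(F)$-modules realized by $\be(\mu) = P$ then converts $\phi^*$ into a shift $H^1(F;V_j) \to H^1(F;V_{j+1})$. Once this is nailed down, summing over $k = 1, \dots, n-1$ gives the total $\dim H^1(N_K; sl(n,\CC)_{\ad\al}) = n-1$, and Theorem \ref{thm1} finishes the proof.
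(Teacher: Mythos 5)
Your proposal is correct, but it takes a genuinely different route from the paper. The paper never computes $H^1(N_K;sl(n,\CC)_{\ad\al})$ here; it bounds the representation variety directly. Writing $\pi_1(N_K)^{(1)}=\pi_1(S)=\langle a,b\rangle$ with longitude $\la=[a,b]$, a representation is metabelian iff $\varrho(\la)=I$; on the Zariski-open set where $\varrho$ is irreducible and $\varrho(\mu)$ has $n$ distinct eigenvalues, the locus $Z_j$ cut out by the $n-1$ equations $\Phi_{\varrho(\la)}(t)=(t-1)^n$ consists exactly of the irreducible metabelian representations (diagonal plus unipotent forces $\varrho(\la)=I$), and finiteness of $H_1(L_n)$ makes $Z_j$ a finite union of conjugation orbits, so $\dim Z_j=n^2-1$, $\dim R_j\le n^2+n-2$, and $\dim X_j\le n-1$; the lower bound is Proposition \ref{prop-Lag}. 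You instead verify the hypothesis of Theorem \ref{thm1} by computing the twisted cohomology outright, via the decomposition of Lemma \ref{lem-a} and the Wang sequence of the fibration: for the once-punctured torus each $H^1(F;\CC_{t^j\chi_k})$ is one-dimensional, so the twisted monodromy on $H^1(F;\be|_F)\cong\CC^n$ is a weighted $n$-cycle with characteristic polynomial $\la^n-c$, giving $\dim\ker(\phi^*-1)\le 1$, with the matching lower bound from Lemma \ref{lem-knot-V}(ii); this checks out, including the flagged identification of $\phi^*$ with the pullback composed with the $\be(\mu)$-shift of weight spaces. Both arguments use genus one essentially (yours so that the blocks are one-dimensional, the paper's so that $\la$ is a commutator of free generators). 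Your version buys more: it establishes $\dim H^1(N_K;sl(n,\CC)_{\ad\al})=n-1$, which is the actual hypothesis of Theorem \ref{thm1} and is not formally implied by $\dim X_j=n-1$ alone, and it yields smoothness of $\xi_\al$. The paper's version is softer and avoids the monodromy bookkeeping, and it bounds every component through $\xi_\al$ without first knowing smoothness. One small correction: $\be_{(n,\chi_k)}$ is unitary not by Proposition \ref{prop-finite} itself but simply because finiteness of $H_1(L_n)$ forces $\chi$, hence each $\chi_k$, to take values in roots of unity (alternatively, orthogonality of $\ad\al$ with respect to the Killing form, as in the proof of Proposition \ref{prop-finite}, suffices for Lemma \ref{lem-knot-V}(ii)).
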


\begin{remark}
As mentioned above, if $K$ is a genus one fibered knot in an integral homology 3-sphere $\Si$, thenthe complement
$\Si \sm \tau(K)$ is homeomorphic to that of the trefoil or figure eight knot (see Proposition 5.14 of \cite{BZ85}).
\end{remark}

\begin{proof}
By Proposition \ref{prop-Lag}, we have that $\dim X_j \geq n-1,$ so it is enough to show
$\dim X_j \leq n-1.$ If $R_j$ is the algebraic component of $R_n(N_K)$ lying above $X_j,$
then
we will show that $\dim R_j \leq n^2+n-2.$ This is sufficient because we know that  $R_j$ contains the irreducible
representation $\al$, and so the generic fiber of the quotient map $t \co R_j \to X_j$
has dimension $n^2-1.$

Consider the subset of $R_j$ defined by
$$\widehat{R}_j = \{ \varrho \in R_j \mid \varrho \text{ is irreducible and $\varrho(\mu)$ has $n$ distinct eigenvalues} \}.$$
This is obviously a Zariski open subset, and since $\al \in \widehat{R}_j,$  it is nonempty.
In particular, we see that $\dim \widehat{R}_j = \dim R_j.$

Given $A \in \SL(n,\CC)$, we use $\Phi_A(t) = \det(tI - A)$ to denote its characteristic polynomial.
In general, given $\ga \in \pi$, the association $\varrho \mapsto \Phi_{\varrho(\ga)}(t)$ gives  an algebraic
 map $\Phi_{\cdot(\ga)} \co R_n(\pi) \lto \CC^{n-1}$, where
$\Phi_{\varrho(\ga)}(t) = t^n + a_1 t^{n-1} + \cdots + a_{n-1} t +(-1)^n$ gives the point $(a_1,\ldots, a_{n-1}) \in \CC^{n-1}.$

 Taking $\ga =\la,$ the longitude of $K$, we define
$$Z_j = \{ \varrho \in \widehat{R}_j \mid  \Phi_{\varrho(\la)}(t) = (t-1)^n \}.$$
Clearly $Z_j$ is a Zariski closed subset of $\widehat{R}_j$. Since $Z_j$ is obtained by applying $n-1$ algebraic equations,
we see that $\dim Z_j \geq \dim R_j -(n-1)$ (see \cite[p. 75, Corollary 2]{Sh95}). Furthermore, since $\al \in Z_j$, we see that $Z_j$ is nonempty.

For any fibered knot, the commutator subgroup of $\pi_1(N_K)$ is the
finitely generated free group given by the fundamental group of the fiber.
In the case of a fibered knot of genus one, this group is a free group of rank two, and
 we obtain the short exact sequence
\begin{equation} \label{short-exact}
1 \to F_2  \lto \pi_1(N_K) \lto \ZZ \to 1.
\end{equation}
Taking $S$ to be the fiber surface, then $F_2 = \pi_1(S) = \langle a,b \rangle$ and we can write the longitude as
$\la = aba^{-1}b^{-1}.$ Thus, a given
representation $\varrho \co \pi_1(N_K) \to \SL(n,\CC)$ is metabelian
if and only if its restriction to $F_2$ is abelian, namely if $\varrho(a)$ and $\varrho(b)$ commute.
Since $\la = [a,b],$ we see that $\varrho$ is metabelian if and only if $\varrho(\la) =I.$
This shows that every irreducible metabelian representation in $\widehat{R}_j$ is contained
in $Z_j$, and we will now show the reverse inclusion.

Suppose $\varrho  \in Z_j$. Then since $Z_j \subset \widehat{R}_j$, $\varrho$ is irreducible and $\varrho(\mu)$ has  $n$ distinct eigenvalues.
Thus $\varrho(\mu)$ is contained in a unique maximal torus, which we can arrange by conjugation to be the standard
maximal torus of diagonal matrices in $\SL(n,\CC)$. Since $\varrho(\la)$ commutes with $\varrho(\mu)$, it follows that
$\varrho(\la)$ is also a diagonal matrix.
The condition  that $\Phi_{\varrho(\la)}(t)= (t-1)^n$ implies
 $\varrho(\la) =I$, and the sequence (\ref{short-exact}) shows that $\varrho$ is necessarily metabelian.

We now make use of the assumption that $H_1(L_n)$ is finite. This implies that, up to conjugation, there are only finitely many irreducible
metabelian representations. Thus the quotient of $Z_j$ by conjugation is a finite collection of points, and since
every $\varrho \in Z_j$ is also irreducible, we conclude that $\dim Z_j = n^2-1.$
Using that $$\dim R_j = \dim \widehat{R}_j \leq \dim Z_j +(n-1) = n^2+n-2,$$ we conclude that  $\dim X_j \leq n-1.$
\end{proof}

Proposition \ref{prop-fiber-one} applies to irreducible metabelian representations of the figure eight knot in all ranks
(see Table \ref{figure-eight}), but it only applies to the trefoil in ranks 2 and 3.
The only other rank where the trefoil admits irreducible metabelian representations is rank 6, and in
that case $H_1(L_6)$ is not finite.

We investigate the general situation of torus knots, and we
note that as a consequence of Proposition 3.10 (iii) of \cite{BF08}, a $(p,q)$ torus
knot $K$ has no irreducible metabelian $\SL(n,\CC)$ representations if $n$ is relatively prime to $p$ and $q$.
Torus knot groups have the following well-known presentation:
\begin{equation} \label{torus-pres}
\pi_1 (N_K)  = \langle x,y \mid x^p=y^q \rangle,
\end{equation}
where the meridian and longitude $\mu$ and $\la$  are represented
    by
    $   \mu = x^s y^r$  and $\la = x^p ( \mu )^{-pq},$ for $r,s \in \ZZ$ with $rp+sq = 1.$
We  choose $n$ to be a divisor of $q$ and work with $SU(n)$
representations for convenience.
Then any  irreducible metabelian representation $\varrho \co \pi_1 (N_K) \to SU(n)$
will satisfy $\varrho(\mu)^n=I$ and $\varrho(\la)=I,$ and this implies that
$\varrho(x)$ and $\varrho(y)$ are $p$-th and $q$-th roots of unity, respectively.

Since $\varrho(x)$ and $\varrho(y)$ are diagonalizable,  we can arrange
that
$\varrho(x)$ is conjugate to $A= {\rm diag}(a_1, \ldots, a_n)$ and
$\varrho(y)$ is conjugate to $B= {\rm diag}(b_1, \ldots, b_n)$,
where $a_1,\ldots, a_n$ are $p$-th roots of unity and $b_1,\ldots, b_n$ are $q$-th roots of unity.
Let $C_A$ and $C_B$ denote the conjugacy classes in $SU(n)$ of $A$ and $B$, respectively. The eigenspaces of $A$ and $B$ determine
partitions $(\al_1,\ldots, \al_k)$   and
  $(\be_1,\ldots, \be_\ell)$ of $n$, respectively,
 and we have
  $$\dim C_A = n^2 - (\al_1^2 + \cdots + \al_k^2) \quad \text{and}\quad  \dim C_B = n^2 - (\be_1^2 + \cdots + \be_\ell^2).$$
For instance, if $A$ has $n$ distinct eigenvalues, then $\dim C_A = n^2-n.$
In general, $\dim C_A$ and $\dim C_B$ are even numbers between  $0$ and $n^2-n.$

The component $R_j$ of $R_{SU(n)}(N_K))$ containing $\varrho$ is just the direct product
$C_A \times C_B$, and it follows that
$\dim R_j =    \dim C_A  + \dim C_B.$
If  $A$ and $B$ can be chosen so that $ \dim C_A  + \dim C_B = n^2+n-2$, then we will
be able to apply Theorem \ref{thm1}.
This will occur if say
$A$ has $n$ distinct eigenvalues and $B$ has one eigenvalue of multiplicity 1 and a second eigenvalue of
multiplicity $n-1.$ Assuming that $R_j$ contains an irreducible representation,
then just as in the proof of Proposition \ref{prop-fiber-one}, it follows that if $X_j \subset X_{SU(n)}(N_K)$ is the
quotient
of $R_j$ under conjugation, then $\dim X_j = n-1.$

For specific examples, consider the torus knots $K=T(2,q),$ where $q$ is a multiple of 3. Then direct calculation shows that any
irreducible metabelian representation $\varrho: \pi_1 (N_K) \to SU(3)$ has
$\dim H^1(N_K; su(3)_{\ad\varrho}) = 2$ (see Proposition 3.1 of \cite{BHK05}, for example).
 Hence Theorem \ref{thm1} applies to establish the existence of
2--dimensional deformation families in  $X_{SU(3)}(N_K)$ and $X_3(N_K).$
\appendix
\section{Deformation arguments} \label{app-deform}
In this appendix, we present the deformation arguments that prove Proposition \ref{prop-simple}. This material is
included for the reader's convenience.
The original arguments  were given for $\SL(2,\CC)$ and $\PSL(2,\CC)$ in \cite{HPS01} and \cite{HP05}, and they were generalized to $\SL(n,\CC)$ in \cite{AHJ10}. In what follows, we present detailed arguments for $\SL(n,\CC)$,  focusing on the implications  for the character variety $X_n(N_K)$, where  $N_K = \Si \sm \tau(K)$ is the complement of a knot in an integral homology 3-sphere.

\begin{proof}[Proof~of~Proposition~\ref{prop-simple}]
The first step is to show that $\xi_{\widehat{\al}}$ is a simple point in
$X_n(\partial N_K)$.
We do this by
comparing the dimension of the cocycles $$Z^1(\pi_1(\partial N_K); sl(n,\CC)_{\ad \al})$$
with the local dimension of $R_n(\partial N_K)$ at $\al$, which is defined to be
the maximal dimension of the irreducible components of $R_n(\partial N_K)$
containing $\al$.

First, some notation. Given a finitely generated group $\pi$ and a representation $\al\co \pi \to \SL(n,\CC)$,
let  $H^*(\pi;sl(n,\CC)_{\ad \al})$ denote the cohomology of the group
with coefficients in the $\pi$--module by $sl(n,\CC)_{\ad \al}$.

In \cite{We64}, Weil observed that there is a natural inclusion of the Zariski tangent space
  $T^{\rm Zar}_\al(R_n(\pi)) \hookrightarrow Z^1(\pi; sl(n,\CC)_{\ad \al})$
  into the space of cocycles, and we will combine  this observation with
  computations of the twisted cohomology of $\pi_1(\partial N_K)$ and
  $\pi_1(N_K)$.

Because $\partial N_K$ is a $K(\ZZ \oplus \ZZ, 1)$, we have isomorphisms
$$H^*(\partial N_K; sl(n,\CC)_{\ad \al}) \lto H^*(\pi_1(\partial N_K); sl(n,\CC)_{\ad \al}),$$
  and the inclusion $N_K \hookrightarrow  K( \pi_1(N_K), 1)$
induces maps
$$H^i(\pi_1(N_K);sl(n,\CC)_{\ad \al})\lto H^i(N_K ;sl(n,\CC)_{\ad \al})$$
that are isomorphisms when $i=0$ and $1$ and injective when $i=2$
(see \cite[Lemma 3.1]{HP05}).

Consider the 2-torus $\partial N_K$ with its standard CW--structure consisting of one 0--cell, two 1--cells and one 2--cell. It is straightforward to verify that  the spaces of twisted 1-coboundaries and 1-cocycles satisfy
\begin{eqnarray*}
\dim B^1(\partial N_K; sl(n,\CC)_{\ad \widehat{\al}}) &=& n^2-1-(n-1) = n^2-n, \text{ and} \\
 \dim Z^1(\partial N_K; sl(n,\CC)_{\ad \widehat{\al}}) &=& 2(n-1)+n^2-n = n^2+n-2.
 \end{eqnarray*}
Since $\widehat{\al}$ sits on an $(n^2+n-2)$--dimensional component,  its local dimension is
$$\dim_{\widehat\al} R_n(\partial N_K) = n^2+n-2.$$

For arbitrary $\si \in R_n(\partial N_K),$ we have
$$\dim_\si R_n(\partial N_K)
\leq \dim T^{\rm Zar}_\si(R_n(\partial N_K))
\leq \dim Z^1(\partial N_K; sl(n,\CC)_{\ad \si}).$$
In our case,
we have equality throughout, and it follows that $\widehat\al$ lies on a unique irreducible component of
$R_n(\partial N_K)$ and is a smooth point of that component (see \cite[\S 2, Theorem 6]{Sh95}).
This shows $\widehat\al$ is a simple point of $R_n(\partial N_K).$

The next step is to show that $\xi_\al$ is a simple point of $X_n(N_K).$
Consider the long exact sequence (\ref{eq5}) in cohomology associated with the
pair $(N_K, \partial N_K)$. Irreducibility of $\al$ implies that
$H^0(N_K; sl(n,\CC)_{\ad \al}) = 0$, and
Lemmas \ref{lem-b}, \ref{lem-c} and Poincar\'e duality
give $H^3(N_K, \partial N_K; sl(n,\CC)_{\ad \al}) = 0$.
Since
$H^1(N_K; sl(n,\CC)_{\ad \al}) = \CC^{n-1}$ by hypothesis,
we see  $H^2(N_K, \partial N_K; sl(n,\CC)_{\ad \al}) =  \CC^{n-1}$
by Poincar\'e duality.

Since $H^1(\partial N_K; sl(n,\CC)_{\ad \widehat\al}) = \CC^{2(n-1)}$, it follows
that the middle row of (\ref{eq5})
 $$   0 \lto  H^1(N_K) \lto H^1(\partial N_K) \lto H^2(N_K, \partial N_K) \lto 0,$$
is short exact (with coefficients in $sl(n,\CC)$ twisted by $\ad \al$ or $\ad \widehat{\al}$ understood).
Thus $j^1=0$ and $j^2=0,$ and further $i^1$ is injective and $i^2$ is an isomorphism.

We now explain the powerful
technique for deforming representations. It involves the following three steps:
\begin{itemize}
\item[(i)] constructing formal deformations,
\item[(ii)] proving integrability by showing  an infinite sequence of obstructions vanish,
\item[(iii)] proving convergence by applying a deep result of Artin \cite{Ar68}.
\end{itemize}

A \emph{formal deformation}
 of $\al$  is a homomorphism
$\al_\infty \co \pi \to \SL(n, \CC[[t]])$ given by
$$\al_\infty(g) = \exp\left(\sum_{i=1}^\infty t^i a_i(g) \right) \al(g),$$
such that $p_0(\al_\infty) = \al$,
where $p_0\co  \SL(n, \CC[[t]]) \to  \SL(n, \CC)$ is the homomorphism given
by setting $t=0$ and where $a_i \co \pi \to sl(n,\CC)_{\ad \al}, i=1,\dots,$ are 1-cochains with twisted coefficients. By \cite[Lemma 3.3]{HPS01}, it follows
that $\al_\infty$ is a homomorphism if and only
if $a_1 \in Z^1(\pi; sl(n,\CC)_{\ad \al})$ is a cocycle, and
we call an element  $a \in Z^1(\pi; sl(n,\CC)_{\ad \al})$
\emph{formally integrable} if there is a formal deformation with leading term $a_1 = a.$

Let $a_1, \ldots, a_k \in C^1(\pi; sl(n,\CC)_{\ad \al})$ be cochains such that
$$\al_k(g) =  \exp\left(\sum_{i=1}^k t^i a_i(g) \right) \al(g)$$
is a homomorphism into $\SL(n, \CC[[t]])$ modulo $t^{k+1}.$
Here, $\al_k$ is called a formal deformation of order $k$, and in this case
by \cite[Proposition 3.1]{HPS01}
there exists an obstruction class $\om_{k+1}:=\om_{k+1}^{(a_1,\ldots, a_k)} \in H^2(\pi;sl(n,\CC)_{\ad \al})$
with the following properties:

\begin{enumerate}
\item[(1)]
There is a cochain $a_{k+1} \co \pi \to sl(n,\CC)$ such that:
$$\al_{k+1}(g) = \exp\left(\sum_{i=1}^{k+1} t^i a_i(g) \right) \al(g)$$
is a homomorphism modulo $t^{k+2}$ if and only if $\om_{k+1}=0.$
\item[(2)]
The obstruction $\om_{k+1}$ is natural, i.e. if $\varphi\co \pi' \to \pi$
is a homomorphism then $\varphi^* \om_k:= \al_k \circ \varphi$ is also a
homomorphism modulo $t^{k+1}$ and $\varphi^*(\om_{k+1}^{(a_1,\ldots, a_k)})
= \om_{k+1}^{(\varphi^* a_1,\ldots, \varphi^* a_k)}.$
\end{enumerate}

\begin{lem} \label{lem-int}
Let $\al \co \pi_1(N_K) \to \SL(n,\CC)$ be an irreducible representation such that
$\dim H^1(N_K;sl(n,\CC)_{\ad \al}) = n-1$. If the image of the restriction
$\widehat\al \co \pi_1(\partial N_K) \to \SL(n,\CC)$
contains an element with $n$ distinct eigenvalues, then every cocycle
$a \in Z^1(\pi_1(N_K);sl(n,\CC)_{\ad\al})$ is integrable.
\end{lem}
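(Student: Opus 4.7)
The plan is to construct, for each cocycle $a \in Z^1(\pi_1(N_K); sl(n,\CC)_{\ad \al})$, a formal deformation $\al_\infty$ of $\al$ whose leading term is $a$, by induction on the order. The base case is immediate: $\al_1(g) = \exp(t \cdot a(g))\al(g)$ defines a formal deformation of order $1$ exactly because $a$ is a cocycle, by \cite[Lemma 3.3]{HPS01}. For the inductive step, assume cochains $a_1 = a, a_2, \ldots, a_k$ have been chosen so that $\al_k(g) = \exp\bigl(\sum_{i=1}^k t^i a_i(g)\bigr)\al(g)$ is a formal deformation of order $k$. The obstruction theory recalled above produces a class $\om_{k+1} \in H^2(\pi_1(N_K); sl(n,\CC)_{\ad\al})$ whose vanishing is equivalent to the existence of a cochain $a_{k+1}$ making $\al_{k+1}$ a formal deformation of order $k+1$. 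The crux of the argument is to prove that $\om_{k+1} = 0$.

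To this end I push everything to the boundary. Let $\iota \co \pi_1(\partial N_K) \hookrightarrow \pi_1(N_K)$ denote the inclusion and set $\widehat\al_k := \al_k \circ \iota$, a formal deformation of $\widehat\al$ of order $k$. Naturality of the obstructions (property (2) above) yields $\iota^*\om_{k+1} = \om_{k+1}^{(\iota^*a_1,\ldots,\iota^*a_k)}$, the obstruction to extending $\widehat\al_k$ by one order. In the first part of the proof of Proposition \ref{prop-simple} we established that $\widehat\al$ is a simple point of $R_n(\partial N_K)$, i.e.\ $R_n(\partial N_K)$ is formally smooth at $\widehat\al$. Formal smoothness means that every formal deformation of $\widehat\al$ of any order admits an extension of the next order, so no obstruction ever occurs on the boundary. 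In particular $\iota^*\om_{k+1} = 0$ in $H^2(\pi_1(\partial N_K); sl(n,\CC)_{\ad \widehat\al})$.

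It remains to promote this vanishing on the boundary to vanishing on $N_K$. The natural map $H^2(\pi_1(N_K); sl(n,\CC)_{\ad\al}) \hookrightarrow H^2(N_K; sl(n,\CC)_{\ad\al})$ is injective, and, as derived earlier from the long exact sequence of the pair $(N_K,\partial N_K)$ under the hypothesis $\dim H^1(N_K; sl(n,\CC)_{\ad\al}) = n-1$, the restriction $i^2 \co H^2(N_K; sl(n,\CC)_{\ad\al}) \to H^2(\partial N_K; sl(n,\CC)_{\ad\widehat\al})$ is an isomorphism. The composite is therefore injective, and sends $\om_{k+1}$ to $\iota^*\om_{k+1} = 0$. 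Hence $\om_{k+1} = 0$, a suitable cochain $a_{k+1}$ exists, and the induction continues. Assembling the sequence of cochains $(a_i)_{i\geq 1}$ produces the desired formal deformation $\al_\infty$ with leading term $a$.

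The hard part of the lemma, which has in fact already been done in the setup, is establishing that $\widehat\al$ is a simple point of $R_n(\partial N_K)$; this is the rigidity on the boundary that makes every boundary obstruction vanish a priori. Once that is in hand, the argument is a clean diagram chase: naturality of obstructions combined with the isomorphism $i^2$ extracted from the long exact sequence of the pair $(N_K,\partial N_K)$ forces each $\om_{k+1}$ to be zero, and the induction runs without further difficulty.
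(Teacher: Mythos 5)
Your proposal is correct and follows essentially the same route as the paper: naturality of the obstruction classes, vanishing of the restricted obstruction because $\widehat\al$ is a smooth point of $R_n(\partial N_K)$ (so every boundary deformation of finite order extends), and injectivity of the restriction $H^2(\pi_1(N_K); sl(n,\CC)_{\ad\al}) \to H^2(\pi_1(\partial N_K); sl(n,\CC)_{\ad\widehat\al})$ obtained by combining the injection of group cohomology into singular cohomology with the isomorphism $i^2$ from the long exact sequence of the pair. The paper packages the last step as an explicit commutative diagram and cites \cite[Lemma 3.7]{HPS01} for the lifting on the boundary, but the content is identical to yours.
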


\begin{proof}
Consider first the commutative diagram:

$$\begin{CD}
  H^2(\pi_1(N_K); sl(n,\CC)_{\ad \al})@>{i^*}>>H^2(\pi_1(\partial N_K); sl(n,\CC)_{\ad \widehat\al}) \\
  @VVV  @VV{\cong}V \\
    H^2(N_K; sl(n,\CC)_{\ad \al}) @>{\cong}>> H^2(\partial N_K; sl(n,\CC)_{\ad \widehat\al}).\\
    \end{CD}$$

Here, the horizontal isomorphism on the bottom follows by consideration of the long exact sequence
(\ref{eq5}), and the vertical isomorphism on the right follows since $\partial N_K$
is a $K(\ZZ \oplus \ZZ, 1).$ Further, by \cite[Lemma 3.3]{HP05}, we know the vertical map on the left is an injection, and this shows $i^*$ is an injection.

We now explain how to prove that every element $a \in Z^1(\pi_1(N_K);sl(n,\CC)_{\ad\al})$ is integrable.
Suppose (by induction) that  $a_1, \ldots, a_k \in C^1(\pi; sl(n,\CC)_{\ad \al})$
are given so that
$$\al_{k}(g) = \exp\left(\sum_{i=1}^{k} t^i a_i(g) \right) \al(g)$$
is a homomorphism modulo $t^{k+1}$.
Then the restriction $\widehat{\al}_k \co \pi_1(\partial N_K) \to \SL(n,\CC[[t]])$
is also a formal deformation of order $k$.
On the other hand, $\widehat\al_k$ is a smooth point of $R_n(\partial N_K)$,
hence by \cite[Lemma 3.7]{HPS01}, $\widehat\al_k$ extends to a formal
deformation of order $k+1$.
Therefore
$$0= \om_{k+1}^{(i^*a_1, \ldots, i^*a_k)} = i^*\om_{k+1}^{(a_1,\ldots, a_k)}.$$
As $i^*$ is injective, the obstruction vanishes, and this completes the proof of the lemma.
\end{proof}

We are now ready to conclude the proof of
Proposition \ref{prop-simple}.
Lemma \ref{lem-int}
shows that all cocycles in $Z^1(\pi_1(N_K);sl(n,\CC)_{\ad\al})$ are integrable.
Applying Artin's theorem \cite{Ar68}, we obtain from a formal deformation
of $\al$ a convergent deformation (see \cite[Lemma 3.6]{HPS01}). Thus
$\al$ is a smooth point of $R_n(N_K)$ with local dimension $\dim_\al R_n(N_K) = n-1.$
It follows that $\al$ is a simple point of $R_n(N_K)$ and this together with irreducibility of $\al$
imply that $\xi_\al$ is a simple point of $X_n(N_K).$
\end{proof}

\end{document}